\documentclass[12pt]{article}

\usepackage{amsthm,amsmath,amssymb,amsfonts}
\usepackage{geometry}
\usepackage{mathtools}
\usepackage{microtype}
\usepackage[shortlabels]{enumitem}
\usepackage{graphicx}
\usepackage{tikz}
\usepackage[dvipsnames]{xcolor}
\usepackage[english]{babel}

\usepackage[colorlinks=true,linkcolor=blue,citecolor=blue,urlcolor=blue,
  pdftitle={Superlinear Lower Bounds for Monochromatic Path Partitions},
  pdfauthor={Heng Li and Lanchao Wang}]{hyperref}
\usepackage[capitalise,noabbrev]{cleveref}

\newtheorem{theorem}{Theorem}[section]
\newtheorem{lemma}[theorem]{Lemma}
\newtheorem{proposition}[theorem]{Proposition}

\newtheorem{conjecture}[theorem]{Conjecture}

\newtheorem{problem}[theorem]{Problem}
\newtheorem{construction}[theorem]{Construction}

\crefname{lemma}{Lemma}{Lemmas}
\crefname{proposition}{Proposition}{Propositions}
\crefname{observation}{Observation}{Observations}
\crefname{conjecture}{Conjecture}{Conjectures}
\crefname{corollary}{Corollary}{Corollaries}
\crefname{problem}{Problem}{Problems}
\crefname{construction}{Construction}{Constructions}

\Crefname{problem}{Problem}{Problems}
\Crefname{construction}{Construction}{Constructions}

\newcommand{\eps}{\varepsilon}
\newcommand{\im}{\operatorname{im}}
\newcommand{\ppath}{p_{\mathrm{path}}}
\newcommand{\pcyc}{p_{\mathrm{cyc}}}
\newcommand{\bpath}{b_{\mathrm{path}}}
\newcommand{\bcyc}{b_{\mathrm{cyc}}}

\title{\fontsize{19}{22}\selectfont Superlinear Lower Bounds for Monochromatic Path Partitions}

\author{
Heng Li\thanks{School of Mathematics, Shandong University, Jinan, China, and
Extremal Combinatorics and Probability Group (ECOPRO), Institute for Basic
Science (IBS), Daejeon, South Korea. Email: heng.li@sdu.edu.cn.}
\and
Lanchao Wang\thanks{School of Mathematics, Nanjing University, Nanjing, China, and
Extremal Combinatorics and Probability Group (ECOPRO), Institute for Basic
Science (IBS),  Daejeon, South Korea.
Email: lanchaowang@foxmail.com.}
}

\date{}

\begin{document}

\maketitle

\begin{abstract}
In 1989, Gy\'arf\'as conjectured that the vertex set of every
$r$-edge-coloured complete graph can be partitioned into at most $r$
vertex-disjoint monochromatic paths. Erd\H{o}s, Gy\'arf\'as, and Pyber
subsequently proposed the analogous conjecture for monochromatic cycles.
Pokrovskiy proved Gy\'arf\'as's conjecture for $r=3$, while disproving
the conjecture of Erd\H{o}s, Gy\'arf\'as, and Pyber for every $r\ge3$
by constructing colourings that require at least $r+1$ monochromatic
cycles. In this paper, we
disprove Gy\'arf\'as's conjecture in a quantitatively strong superlinear
form: for every sufficiently large $r$, there exists an $r$-edge-coloured
complete graph that requires at least $(1-o(1))r\log\log r$
vertex-disjoint monochromatic paths. Consequently, the monochromatic
cycle-partition number is also superlinear in $r$.
Our construction also disproves two conjectures of Pokrovskiy: one on
monochromatic cycle coverings and the other on path coverings in the
balanced bipartite setting.
\end{abstract}
\section{Introduction}

Monochromatic partition problems ask for a bounded number of
vertex-disjoint monochromatic pieces covering all vertices. Paths and cycles are the basic cases. This line of research goes back
to Gerencs\'er and Gy\'arf\'as~\cite{GerencserGyarfas}, who proved that every $2$-edge-coloured complete graph can be partitioned
into two monochromatic paths. In 1989, Gy\'arf\'as~\cite{Gyarfas} posed the following conjecture.

\begin{conjecture}\label{conj}
For every $r\ge3$, the vertex set of every $r$-edge-coloured complete
graph can be partitioned into at most $r$ vertex-disjoint monochromatic
paths.
\end{conjecture}

The conjecture was motivated by a theorem of Rado~\cite{Rado}, who proved
the corresponding statement for countably infinite complete graphs. This conjecture has prompted considerable subsequent work; see, for
example, the survey of Gy\'arf\'as~\cite{GyarfasSurvey}. For
convenience, we first introduce some notation.

For $r\ge2$, let $\ppath(r)$ be the minimum number of vertex-disjoint monochromatic paths needed to partition the vertex set of any $r$-edge-coloured complete graph, and define $\pcyc(r)$ analogously for monochromatic cycles, where single
vertices and edges are also regarded as cycles. The finiteness of $\ppath(r)$ was posed by Gy\'arf\'as as a
weaker form of Conjecture~\ref{conj}. Erd\H{o}s, Gy\'arf\'as, and Pyber~\cite{EGP} resolved it by proving
that $\pcyc(r)=O(r^2\log r)$, which also implies
$\ppath(r)=O(r^2\log r)$ since every monochromatic cycle has a
monochromatic spanning path. Erd\H{o}s, Gy\'arf\'as, and Pyber further conjectured that
$\pcyc(r)\le r$, with the case $r=2$ having been posed earlier by Lehel.

Gy\'arf\'as, Ruszink\'o, S\'ark\"ozy, and Szemer\'edi~\cite{GRSS}
proved that, for every fixed $r$, every sufficiently large
$r$-edge-coloured complete graph can be partitioned into at most
$100r\log r$ monochromatic cycles. In the three-colour case, they
further showed~\cite{GRSS11} that all but $o(n)$ vertices can be
partitioned into three monochromatic cycles. Lehel's conjecture was established through work of {\L}uczak, R\"odl,
and Szemer\'edi~\cite{LRS} and Allen~\cite{Allen}, and was finally
resolved by Bessy and Thomass\'e~\cite{BT}. Pokrovskiy~\cite{Pok} proved that $\ppath(3)\le3$, settling
Gy\'arf\'as's conjecture for three colours. In the same paper, he
disproved the conjecture of Erd\H{o}s, Gy\'arf\'as, and Pyber for every
$r\ge3$ by constructing $r$-edge-coloured complete graphs that require
at least $r+1$ monochromatic cycles. This naturally raised the questions of whether Gy\'arf\'as's conjecture
holds for all $r$ and whether $\pcyc(r)=O(r)$. In this paper, we show that both statements fail and, in fact, obtain a
superlinear lower bound.

\begin{theorem}\label{thm:main}
For every sufficiently large $r$, we have
$\ppath(r)\ge (1-o(1))r\log\log r.$
\end{theorem}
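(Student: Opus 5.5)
We will build an explicit $r$-edge-coloured complete graph and show that any partition into monochromatic paths uses many paths. The mechanism is a many-level version of Pokrovskiy's $r+1$ construction, organised through balanced bipartite covering. The basic gadget is a complete bipartite graph $K_{n,n}$ with parts $A,B$ whose edges are coloured so that every partition into monochromatic paths is expensive. If the subgraph of colour $i$ is a disjoint union of complete bipartite blocks $A_{i,j}\times B_{i,j}$ with $|A_{i,j}|\ne|B_{i,j}|$, then a path of colour $i$ alternates sides inside one block. Each path therefore covers at most one more vertex of the larger side than of the smaller. The resulting imbalance has to be absorbed by extra paths.

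The plan has three steps.

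\textbf{Step 1 (recursive bipartite colouring).} First, define the colouring by recursion on levels $\ell=1,\dots,L$. At level $\ell$, one batch of roughly $r/L$ fresh colours splits the current blocks into many sub-blocks whose side ratios are skewed by a factor depending on $\ell$. The skews are chosen so that the side sizes form a sequence growing doubly exponentially. This is where $\log\log r$ enters: with $r$ colours one can afford $L\approx \log\log r$ levels while keeping $r/L$ colours per level.

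\textbf{Step 2 (lifting to a complete graph).} Next, extend the bipartite colouring to a colouring of $K_N$. Colour the edges inside $A$ and inside $B$ with the remaining colours, taking one dedicated colour class for each side, as in Pokrovskiy's construction. Within each block, the vertex counts are weighted so that these inner colours cannot repair the imbalance cheaply. A path using an inner colour lies entirely inside $A$ or inside $B$, so it only affects the counting below in a controlled way.

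\textbf{Step 3 (weighted counting / LP duality).} Then, for an arbitrary partition into monochromatic paths $P_1,\dots,P_m$, assign weights $w(v)$ to the vertices so that every monochromatic path $P$ satisfies $\sum_{v\in P}w(v)\le 1$, while $\sum_v w(v)\ge (1-o(1))r\log\log r$. Summing over the paths of the partition then gives $m\ge \sum_v w(v)$. A natural choice gives the vertices in level-$\ell$ blocks a weight inversely proportional to the size of the block side, so that each level contributes about $r$ to the total. To verify the constraint, take a path of colour $i$ at level $\ell$. It lives in a single block and uses alternate sides, so its weight is at most about $1/|{\rm smaller\ side}|$ times its length. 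The skew ensures this is at most $1$ up to a $1+o(1)$ factor.

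The main obstacle is Step 3. A single path can enter and leave blocks at different levels when it uses an inner colour, and the weights from different levels must not add up along it. I would first try to make the weights decay rapidly with the level, using the doubly exponential growth of the sizes. The aim is that the cross-level contributions form a convergent series, absorbed into the $o(1)$ term. Balancing this decay against keeping a total of about $r$ per level is exactly what forces $L\sim\log\log r$ rather than $\log r$.
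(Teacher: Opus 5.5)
\textbf{The approach cannot work.} The failure is in Step~3, and it is not a matter of tuning levels or decay rates: a weighting of the kind you describe never gives more than a linear bound, in \emph{any} $r$-edge-coloured complete graph.

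Suppose $w\colon V\to\mathbb{R}$ satisfies $w(P)\le 1$ for every monochromatic path $P$, single vertices included. This is exactly what you need in order to sum over an arbitrary partition. Let $P^+=\{v:w(v)>0\}$, $W=w(P^+)$, and $\mu=w/W$ on $P^+$.
\begin{itemize}
\item If some $\mu(v)\ge 1/(4r)$, the one-vertex path gives $W\le 4r$.
\item Otherwise $\sum_{\{u,v\}\subseteq P^+}\mu(u)\mu(v)>3/8$. Every such pair is an edge of some colour, so some colour $c$ has edge mass $\varepsilon_c=\sum_{uv\in E_c,\ u,v\in P^+}\mu(u)\mu(v)>3/(8r)$.
\item Now delete, one at a time, vertices whose $\mu$-weighted colour-$c$ degree in the current graph is below $\varepsilon_c$. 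Deleting $v$ removes less than $\mu(v)\varepsilon_c$ of edge mass, so in total less than $\varepsilon_c$ is removed. Hence a nonempty subgraph $H$ survives in which every vertex has weighted degree at least $\varepsilon_c$.
\item A colour-$c$ path in $H$ that cannot be extended at its endpoint contains that endpoint's neighbourhood. It therefore has $\mu$-mass at least $\varepsilon_c$, i.e.\ $w$-weight greater than $3W/(8r)$. Since that weight is at most $1$, we get $W<8r/3$.
\end{itemize}
In all cases $\sum_v w(v)\le W\le 4r$. Equivalently, by LP duality, the fractional monochromatic path-partition number of every $r$-coloured complete graph is at most $4r$. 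So your accounting ``each level contributes about $r$'' cannot hold for more than a bounded number of levels, and no choice of skews or decay can rescue it. A superlinear bound must use the integrality of the partition, which any weight certificate ignores by design.

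Step~2 also fails as written. If all edges inside $A$ get one colour and all edges inside $B$ another, then $A$ and $B$ are monochromatic cliques. Each is spanned by a monochromatic Hamiltonian path, so two paths partition $K_N$ whatever the bipartite gadget is. The edges inside the parts are exactly the difficulty.

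The paper handles them by ordering $V=V_1\cup\cdots\cup V_m$. Each edge $xy$ with $x\prec y$ and $x\in V_i$ gets the colour $\phi_y(i)$, taken from a random injective label of its later endpoint. The sparse graph $J$ of conflicting edges is recoloured properly with $o(q)$ auxiliary colours. As a result, every base-colour class is a vertex-disjoint union of unbalanced bipartite pieces $F_{i,c}$.

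Integrality then enters through a near-perfect matching $M$ built from the path partition:
\begin{itemize}
\item Each path contributes to only one pair $(i,c)$, so $qX=\sum_i|C_i(M)|\le\ell$.
\item A late vertex $u$ can be matched back into $V_i$ by a base colour only if $\phi_u(i)\in C_i(M)$, which is an all-or-nothing set condition.
\item The proportion of late vertices avoiding every $C_i(M)$ outside a small exceptional set $L$ is at least $e^{-X-o(1)}$.
\item A counting argument over a dyadically chosen tail of $s$ classes shows that this proportion is $O((X+1)/\log m)$.
\end{itemize}
Comparing the two gives $X\ge(1-o(1))\log\log m$. The $\log\log$ comes from this comparison of $e^{-X}$ with $1/\log m$, not from doubly exponential block sizes.
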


The key idea is to construct a random colouring in which each
monochromatic subgraph splits into many well-controlled components.
Since every monochromatic path lies in a single component, a partition
into a few paths can involve only a few such components, which we show
cannot together cover all vertices. Taking suitable blow-ups of the construction yields
counterexamples with an arbitrarily large number of vertices.

\begin{theorem}\label{cor:arbitrarily-large}
For every sufficiently large $r$ and every $n_0$, there exist $n>n_0$
and an $r$-edge-colouring of $K_n$ whose vertex set cannot be partitioned
into fewer than $(1-o(1))r\log\log r$ monochromatic paths.
\end{theorem}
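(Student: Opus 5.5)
We will derive this from \cref{thm:main} by a blow-up argument. \cref{thm:main} gives an $r$-edge-colouring $c$ of some $K_m$ needing at least $k=(1-o(1))r\log\log r$ monochromatic paths. We expect its proof to establish something stronger than the bare number: in the construction, each colour class splits into controlled components, and fewer than $k$ monochromatic components cannot cover $V(K_m)$. We will work with this component-covering property, since it transfers to blow-ups while path partitions alone do not obviously do so.

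Replace each vertex $v$ of $K_m$ by a set $B_v$ of $t$ vertices, where $mt>n_0$. Colour an edge between $B_u$ and $B_v$ with $c(uv)$ for $u\ne v$. Inside each $B_v$, colour all edges with a single colour that occurs at $v$ in $c$, say $c(vw_v)$ for some fixed neighbour $w_v$.

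Consider any monochromatic connected subgraph of the blow-up, say in colour $i$. Its projection $\pi$ to $K_m$ is a connected set of colour $i$, because edges inside a blob project to a single vertex and edges between blobs project to edges of colour $i$. So each colour-$i$ component of the blow-up lies inside $\bigcup_{v\in C}B_v$ for some colour-$i$ component $C$ of $c$. Here we use that the colour of $B_v$ is a colour present at $v$: the blob then attaches to an existing colour-$i$ component through $v$ and creates no new component. A monochromatic path in the blow-up therefore lies inside the blow-up of one monochromatic component of $c$. Now suppose the blow-up is partitioned into fewer than $k$ monochromatic paths. Take the corresponding components of $c$, at most one per path. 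Every vertex of $K_m$ is covered by one of them, since any vertex of $B_v$ lies on some path. This gives fewer than $k$ monochromatic components covering $V(K_m)$, a contradiction.

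The main obstacle is that \cref{thm:main} as literally stated only forbids partitions into paths. The argument needs the stronger covering statement about components, which is what the construction's proof actually delivers, according to the outline preceding the theorem. So I will extract that component-cover lemma explicitly from the proof of \cref{thm:main} and invoke it here. If one insists on using only the path statement, one could instead try to contract a path partition of the blow-up back to $K_m$, but a path need not project onto a path. I would not pursue that route.
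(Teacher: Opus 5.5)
There is a genuine gap: the component-cover lemma you plan to extract is false. This holds for every $r$-edge-coloured complete graph, not just the paper's construction. Fix any vertex $v$; each $u\ne v$ lies in the monochromatic component of colour $c(uv)$ that contains $v$. So at most $r$ monochromatic components always cover the whole vertex set. One can even trim them to a partition of $V$ into $r$ sets, each inside a single monochromatic component. Since $(1-o(1))r\log\log r>r$ for large $r$, no statement of the form ``fewer than $k$ monochromatic components cannot cover $V$'' holds, and your final contradiction never materialises. In fact, any argument that only records which monochromatic component each path lies in cannot give a lower bound above $r$.

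In the paper's construction you can see this directly: the $q$ graphs $F_{1,1},\dots,F_{1,q}$ already cover $V$. Every $x\in V_1$ has some colour $c\notin\im\phi_x$ and lies in $F_{1,c}$, while every later vertex $y$ lies in $F_{1,\phi_y(1)}$. Despite the informal sentence in the introduction, the lower bound does not come from covering. It comes from vertex-disjointness combined with imbalance. Each $F_{i,c}$ is bipartite, with one side of size about $T$ inside $V_i$ and the other of size only about $(m-i+1)T/q$. The union $M$ of maximum matchings of the paths is an almost perfect matching whose base-coloured edges are supported on at most $\ell$ pairs $(i,c)$. Lemmas~\ref{lem:tail-covering} and~\ref{lem:matching-support} show that such a matching forces $\sum_i|C_i(M)|\ge(1-o(1))q\log\log m$ when $\ell$ is small. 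Passing to components throws away exactly this information.

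What must be transferred to the blow-up is the matching statement, and this is what the paper does. It colours all cluster interiors with one fresh colour, which is affordable since $q+\Delta(J)+2\le r$. The components of that colour are the cliques $B_v$, and each original auxiliary colour becomes a disjoint union of copies of $K_{h,h}$. Hence every non-base path contributes at most $h$ matching edges, so $A\le h\ell$. The paper then reruns the counting of Lemma~\ref{lem:tail-covering} with all class sizes multiplied by $h$ (Lemma~\ref{lem:blowup-matching}). The factors of $h$ cancel, because $|U|$ is at least about $shT\rho$ while $2A+D\le 2h\ell+\ell$.

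Your choice of colouring $B_v$ with a colour already present at $v$ would actively break this argument. Typically, $w_x$ is later than $x\in V_i$ and $xw_x\notin E(J)$. Then $B_x$ receives a colour $c$ for which $x$ lies on the large side of $F_{i,c}$. A single colour-$c$ path can sweep the whole clique $B_x$, hop through one vertex of a later cluster, sweep the next such clique, and so on. Base-coloured paths are then no longer constrained by the small side of $F_{i,c}$, and the counting in Lemma~\ref{lem:tail-covering} breaks down. A fresh colour for the cluster interiors is what makes the blow-up argument go through.
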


After disproving the conjecture of Erd\H{o}s, Gy\'arf\'as, and Pyber~\cite{EGP},
Pokrovskiy~\cite{Pok} conjectured the weaker statement that, for every
fixed $r$, there is a constant $c_r$ such that $r$ vertex-disjoint
monochromatic cycles cover all but at most $c_r$ vertices in every
$r$-edge-coloured complete graph. Our construction gives the following stronger statement for paths,
which also disproves this conjecture.

\begin{theorem}\label{thm:path-uncovered}
For every sufficiently large integer $r$, there exists a constant
$\eta_r>0$ such that, for infinitely many integers $n$, there is an
$r$-edge-colouring of $K_n$ in which every collection of at most $r$
vertex-disjoint monochromatic paths leaves at least $\eta_r n$
vertices uncovered.
\end{theorem}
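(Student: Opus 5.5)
Our plan is to deduce \Cref{thm:path-uncovered} from the random colouring behind \Cref{thm:main}, after taking a balanced blow-up. First we fix a base construction. For large $r$, let $G_0$ be an $r$-edge-coloured complete graph on a vertex set $V_0$ of size $m$ with the following property: each colour class splits into components, and no choice of at most $r'$ monochromatic components covers $V_0$, where $r'=(1-o(1))r\log\log r$. This is exactly the covering statement that drives the proof of \Cref{thm:main}, since a path lies in a single monochromatic component. We need only the weaker consequence that no $r$ monochromatic components cover $V_0$, and $r<r'$ once $r$ is large.

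Next we blow up. For each $t$, replace every vertex $v\in V_0$ by a set $B_v$ of $t$ vertices, so $n=tm$, and colour edges between $B_u$ and $B_v$ ($u\ne v$) with the colour of $uv$. Inside each $B_v$, colour edges with a colour chosen so that no new connections arise. The natural choice is a colour $c$ with respect to which $v$ already lies in some component; every vertex lies in a component of each colour if we count singletons. Then every monochromatic component of the blow-up is contained in $\bigcup_{v\in S}B_v$, where $S$ is a monochromatic component of $G_0$ (or a singleton). We have to check that this choice of inner colour does not merge components. An inner edge of colour $c$ lies within $B_v$, and $B_v$ already belongs to the blow-up of the colour-$c$ component of $v$, so nothing merges.

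Now the counting step. Take any collection of at most $r$ vertex-disjoint monochromatic paths. Each path lies in the blow-up of one component $S_i$ of $G_0$. Since at most $r$ components cannot cover $V_0$, some vertex $v\in V_0$ lies in no $S_i$. Then the whole block $B_v$ is uncovered, giving at least $t=n/m$ uncovered vertices. So $\eta_r=1/m$ works for all $n\in m\mathbb{N}$, which gives infinitely many $n$.

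The main obstacle is the first step. It relies on the construction giving a statement about covering by \emph{components} with a margin, and not merely about paths in one fixed graph. We will state it explicitly as the lemma underlying \Cref{thm:main}. If that proof instead uses path-specific features, such as bounds on path lengths inside components, the blow-up would break those features. In that case we would first try to show that the argument only ever uses the component structure. One possible complication is singleton components: if the construction's colour classes do not span all vertices, then a single vertex counts as a trivial path. In the blow-up, a singleton $v$ becomes $B_v$ with the inner colour, which a single path can cover. This is consistent with treating $\{v\}$ as a component, so the count is unaffected.
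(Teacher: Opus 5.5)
There is a genuine gap, and it is in your first step. No $r$-edge-coloured complete graph can have the property you require of $G_0$. Fix any vertex $v$ and, for each colour $i$, let $S_i$ be the colour-$i$ component containing $v$. Every $u\ne v$ is joined to $v$ by an edge of some colour $i$, so $u\in S_i$. Hence the $r$ components $S_1,\ldots,S_r$ always cover the whole vertex set, and a fortiori your claim with $r'>r$ components is false. So no component-covering lemma lies behind \Cref{thm:main}, and your block-counting step has nothing to run on.

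You can see this concretely in the paper's construction. For $i<m$, the connected graph $F_{i,c}$ contains all $(1-o(1))T$ vertices $x\in V_i$ with $c\notin\im\phi_x$, so a few pieces already cover each class. What the proof actually uses is exactly the path-specific feature you flagged as a risk. $F_{i,c}$ is bipartite and extremely unbalanced: its other side $\{y:\phi_y(i)=c\}$ has only about $(m-i+1)T/q$ vertices. So a path, or indeed any matching, inside $F_{i,c}$ covers only a tiny part of the component.

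Accordingly, the paper argues with matchings rather than components. Each path contributes a maximum matching that misses at most one of its vertices. The base-coloured edges of the union then use few pairs $(i,c)$, and the counting in \Cref{lem:tail-covering} shows that such a matching cannot cover almost all vertices of the last $s$ classes.

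For \Cref{thm:path-uncovered}, the paper proceeds as follows.
\begin{enumerate}
\item The interiors of all clusters receive one single fresh colour. They then form $K_h$-components and contribute $A\le h\ell$, without touching the bipartite structure of the $\widetilde F_{i,c}$.
\item \Cref{lem:blowup-matching} is proved with constants independent of $h$.
\item Suppose $\ell\le r$ paths leave $u<h$ vertices uncovered. Then $D\le u+\ell$, $A+D<hr+h+r\le 2hq\log\log m$, and $X\le r/q=1+o(1)$. This contradicts $X\ge(1-o(1))\log\log m$.
\item Hence at least $h=N/n$ vertices are uncovered, and $\eta_r=1/n$ works.
\end{enumerate}
This argument yields only a count of uncovered vertices, not a whole missed block $B_v$. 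So your counting step must be replaced as well, not just the lemma feeding it.
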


Monochromatic path and cycle partition problems have also been studied
for non-complete host graphs, including nearly complete graphs, graphs
with prescribed independence number or large minimum degree, and random
graphs; see, for example,~\cite{GJS97,Sarkozy11,BBGGS,Letzter19,KLLP,LangLo,DiBraccioPatel}. Our method also extends to balanced complete bipartite graphs, whose
monochromatic partition problems go back to Gy\'arf\'as~\cite{Gyarfas83}.
For $r\ge2$, let $\bpath(r)$ be the minimum number of vertex-disjoint
monochromatic paths needed to partition the vertex set of any
$r$-edge-coloured balanced complete bipartite graph, and define
$\bcyc(r)$ analogously for monochromatic cycles.

For cycles, Haxell~\cite{Haxell} proved
$\bcyc(r)=O((r\log r)^2)$, later improved to
$O(r^2\log r)$ by Peng, R\"odl, and Ruci\'nski~\cite{PRR}.
For fixed $r$ and sufficiently large $n$, Lang and Stein~\cite{LS}
showed that $4r^2$ monochromatic cycles suffice to partition
$K_{n,n}$. The cases of two and three colours were studied by
Schaudt and Stein~\cite{SchaudtStein} and by Lang, Schaudt, and
Stein~\cite{LangSchaudtStein}, while Benevides, Quintino, and
Talon~\cite{BQT} recently proved that $\bcyc(2)\le4$.

For paths, Pokrovskiy~\cite{Pok} proved $\bpath(2)=3$ and, more
generally, $\bpath(r)\ge2r-1$ for every $r\ge2$. He conjectured that
$\bpath(r)\le2r-1$ for all $r\ge2$.
We disprove this conjecture.

\begin{theorem}\label{thm:bip}
We have $\bpath(r)=\Omega(r\log r)$.
\end{theorem}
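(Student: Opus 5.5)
We construct a random colouring of $K_{n,n}$ in which every colour class splits into many small connected components. Any monochromatic path then lives inside one component, so a partition into $k$ paths uses at most $k$ components. The goal is to show that the union of any $cr\log r$ components, for a suitable small $c>0$, misses a vertex.

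\textbf{Construction.} Let the sides be $A$ and $B$, and fix a parameter $m$ (the number of groups per colour); we expect $m\approx r/2$. For each colour $i\in[r]$, choose independent uniformly random maps $f_i\colon A\to[m]$ and $g_i\colon B\to[m]$. For $a\in A$, $b\in B$ with $f_i(a)=g_i(b)$, call $i$ \emph{available} for $ab$. Each edge must be coloured by an available colour, so every component of colour $i$ lies inside some block $f_i^{-1}(j)\cup g_i^{-1}(j)$.

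\textbf{Every edge has an available colour.} An edge has no available colour with probability $(1-1/m)^r\approx e^{-r/m}$. We can remove this failure in one of two ways:
\begin{itemize}
\item take $n$ small, say $n\le e^{r/(3m)}$, and use a union bound over the $n^2$ edges;
\item or make the maps dependent (for example via a nested/product structure) so that every pair is guaranteed at least one agreement.
\end{itemize}
The simplest concrete choice is $m=\lfloor r/(C\log n)\rfloor$ with $n$ polynomial in $r$. This gives $m\approx r/(C\log r)$, so each colour has about $r/\log r$ blocks, and the whole colouring has roughly $r\cdot m$ blocks in total.

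\textbf{Covering by few blocks fails.} Suppose a path partition of size $k$ exists. Each path lies in a single block $(i,j)$, so the vertex set $A\cup B$ is covered by $k$ blocks $\{(i_t,j_t)\}$. Fix such a choice of $k$ blocks. A given vertex $a\in A$ is covered only if $f_{i_t}(a)=j_t$ for some $t$. Grouping the chosen blocks by colour, with $k_i$ blocks in colour $i$, the probability that $a$ is covered is
\[
1-\prod_i\Bigl(1-\frac{k_i}{m}\Bigr)\le 1-e^{-O(k/m)}
\]
when each $k_i\le m/2$. Colours with $k_i>m/2$ consume at least $m/2$ of the budget each, so at most $2k/m$ colours can be of this kind; these are handled separately. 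Vertices are independent, so the probability that all $n$ vertices of $A$ are covered is at most $(1-e^{-O(k/m)})^n$. The number of choices of $k$ blocks is at most $(rm)^k$. The union bound therefore requires
\[
n e^{-O(k/m)}\gg k\log(rm).
\]
With $k=cr\log r$ and $m=\Theta(r/\log r)$ we get $k/m=\Theta(c\log^2 r)$, which is too large. This shows the block count must be balanced differently. The natural fix is to take $n$ large, of size $\exp(\Theta(r/m))$, and choose $m$ to optimise.

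\textbf{Main obstacle.} The tension is between two requirements:
\begin{itemize}
\item all $n^2$ edges must receive an available colour, which forces $r/m\gtrsim\log n$;
\item $k$ blocks must fail to cover, which needs $n e^{-k/m}\gtrsim k\log r$.
\end{itemize}
Taking $n=e^{r/(2m)}$ in the second condition gives $k\lesssim (r/2)$ up to logarithmic terms. This is only linear in $r$, so fully independent uniform maps do not suffice. The plan is therefore to use a two-level or hierarchical choice of the maps in which pairs are guaranteed to agree somewhere (for example, blocks indexed by coordinates of a code of length $r$ with pairwise agreement). For this to work, the coverage events under the dependent structure must still decay, so that $k$ blocks cover only a fraction $1-e^{-\Omega(k\log r/r)}$ of the vertices. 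Verifying that such a structured family gives the extra $\log r$ factor is the step I expect to be hardest.
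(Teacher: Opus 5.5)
There is a genuine gap, and it lies in your reduction step, not in the choice of maps. You replace the path partition by the weaker statement that $A\cup B$ is covered by $k$ blocks, and then try to show that $cr\log r$ blocks cannot cover. No colouring of $K_{n,n}$ has that property. Fix $a_0\in A$ and $b_0\in B$. Every edge $a_0b$ has an available colour $i$, so $b$ lies in the block $(i,f_i(a_0))$. Hence the $r$ blocks $(i,f_i(a_0))$, $i\in[r]$, cover $B$, and likewise the $r$ blocks $(i,g_i(b_0))$ cover $A$.

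So $2r$ blocks always cover the vertex set, whether the maps are independent, hierarchical or code-based. More generally, the monochromatic components through one vertex on each side always cover $K_{n,n}$. The tension you found between ``every edge gets an available colour'' and ``few blocks fail to cover'' is exactly this obstruction. The step you flag as hardest is therefore false, and the plan as stated cannot give more than a linear bound.

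The missing idea is to keep the bipartite balance of paths rather than just covering. Each path in $K_{n,n}$ has a matching missing at most one vertex, and each path lies in one monochromatic component. So a partition into $\ell$ paths gives a matching supported on at most $\ell$ components. Its unmatched vertices are equally many on each side, at most $\ell/2$ per side. Adding arbitrary edges then yields a perfect matching $M$ touching at most $3\ell/2$ components. A perfect matching demands much more than a cover: every vertex must be matched inside a component, so each component must absorb equally many vertices from both sides.

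The paper exploits this with a different construction:
\begin{itemize}
\item $V=V_1\cup\dots\cup V_m$, and each $y\in W$ receives a random injection $\phi_y:[m]\hookrightarrow[q]$.
\item An edge $xy$ with $x\in V_i$ gets colour $\phi_y(i)$.
\item So the colour-$c$ components are the complete bipartite graphs between $V_i$ and $\{y:\phi_y(i)=c\}$, and they are vertex-disjoint for fixed $c$.
\end{itemize}
Write $\sum_i|C_i(M)|=qX$, and let $L$ be the set of classes on which $M$ uses more than $q/4$ colours, so $|L|<4X$. A vertex $y$ whose labels avoid $C_i(M)$ for every $i\notin L$ can only be matched into $\bigcup_{i\in L}V_i$, which has size $|L|T\le 4XT$. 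By the product bound and concentration, such $y$ form at least an $(e^{-2X}-o(1/m))$-fraction of $W$. Comparing the two gives $e^{-2X}\lesssim X/m$, hence $X=\Omega(\log m)$ and $\ell=\Omega(q\log q)$.

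In this construction $q+m-1$ components already cover every vertex: the $F_{1,c}$ cover $W\cup V_1$, plus one component for each other $V_i$. So the extra logarithmic factor comes entirely from the matching and balance argument, which your proposal never uses.
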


Our construction also yields a consequence for partitions into monochromatic
regular graphs. For $k\ge2$, let $p_k(r)$ be the minimum number of connected
monochromatic $k$-regular graphs and single vertices required to
partition any $r$-edge-coloured complete graph. S\'ark\"ozy and
Selkow~\cite{SS} proved that
$p_k(r)\le r^{O(r\log r+k)}$.
S\'ark\"ozy, Selkow, and Song~\cite{SSS} later showed that, for
sufficiently large complete graphs, $100r\log r+2rk$ pieces suffice,
and also gave the lower bound $(r-1)(k-1)+1$. Thus, for fixed $k$, the
previously known lower bound was only linear in $r$. Our argument yields
a superlinear lower bound.

\begin{proposition}\label{prop:regular}
For every sufficiently large $r$ and every integer $k\ge2$,
$$
p_k(r)\ge (1-o(1))r\log\log r,
$$
where the $o(1)$ depends only on $r$.
\end{proposition}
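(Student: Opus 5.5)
The plan is to reuse the colouring behind \cref{thm:main} rather than build a new one. The key structural fact there is that every monochromatic path lies in a single component of its colour class. The argument for \cref{thm:main} is meant to show the following: if $\mathcal{C}$ is any family of monochromatic components whose union covers $V(K_n)$, then $|\mathcal{C}|\ge(1-o(1))r\log\log r$. I will isolate this as the real content of that proof, a \emph{component-covering bound}. Note that it never uses disjointness or path structure. The proposition then follows at once. A connected monochromatic $k$-regular graph lies inside one monochromatic component, and so does a single vertex (take any component containing it in any colour). So a partition into $m$ such pieces gives a covering of $V$ by at most $m$ monochromatic components, forcing $m\ge(1-o(1))r\log\log r$. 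This holds uniformly in $k$, since $k$ never enters.

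The steps are as follows. First, recall the random colouring from the proof of \cref{thm:main} on $n$ vertices, with $n$ depending only on $r$. Second, state and check the component-covering bound. This means rereading that proof and confirming it only bounds how many components are needed to cover all vertices, which is exactly the heuristic stated after \cref{thm:main}. Third, apply the containment observation above. Fourth, address the fact that $p_k(r)$ is defined as a minimum over all $r$-coloured complete graphs. The bound is witnessed by one colouring, and the $o(1)$ term is inherited from \cref{thm:main}, so it depends only on $r$ and not on $k$.

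The main obstacle is the second step. The proof of \cref{thm:main} might use more than component membership. For example, it might argue that a path can use few vertices of some small component, or it might use disjointness to charge vertices to pieces. If it genuinely relies on disjointness, I will keep disjointness: the regular pieces are also vertex-disjoint, so any counting of covered vertices per piece carries over unchanged. The only feature specific to paths that could fail is a bound on the number of vertices a single piece captures within a component. A regular connected subgraph of a component has at most as many vertices as the component, so any such bound stated in terms of component sizes transfers. If the proof uses path-specific structure, for instance a path entering and leaving a bipartite-like part, I would instead fall back on the weaker component-covering argument. I would check that the probabilistic estimates showing that few components cannot cover $V$ already give the same constant.

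One might worry that the pieces can be arbitrarily large when $k$ is large, but they are confined to components, so no issue arises. I would also remark that the blow-up in \cref{cor:arbitrarily-large} is not needed here, because $p_k(r)$ takes the worst case over all $n$.
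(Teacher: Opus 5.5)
\textbf{There is a genuine gap.} Your central step, the component-covering bound, is false, not merely unproved. In \emph{every} $r$-edge-coloured complete graph, fix a vertex $v$ and take, for each colour $c$, the colour-$c$ component containing $v$. Every $u\ne v$ lies in the component whose colour is that of $uv$, so $r$ monochromatic components always cover the vertex set. In Construction~\ref{con:colouring} this is concrete: the $q$ connected graphs $F_{1,1},\dots,F_{1,q}$ already cover $V$. So no rereading of the proof of Theorem~\ref{thm:main} can give a superlinear covering bound, and your fallback to ``the weaker component-covering argument'' has nothing to stand on.

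The introduction's heuristic is loose; the actual proof uses both disjointness and path structure. The components $F_{i,c}$ are huge but very unbalanced bipartite graphs. One side has size $(1-o(1))T$ inside $V_i$, while the other has size only $O(mT/q)=o(T)$. A path in $F_{i,c}$ alternates sides, so it contains a matching missing at most one vertex. The whole argument is phrased through this matching $M$ from Lemma~\ref{lem:path-matching}, via the quantities $A(M)$, $D$, $X$ that feed into Lemma~\ref{lem:matching-support}. A transfer ``in terms of component sizes'', as you propose, is exactly what fails, because the components are large. What matters is how evenly a piece uses the two sides.

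\textbf{The missing idea} is the regular-graph analogue of that matching:
\begin{enumerate}
\item Auxiliary colour classes are matchings (Lemma~\ref{lem:construction-properties}\ref{item:auxiliary-matching}). So a connected $k$-regular part with $k\ge2$ must have a base colour $c$.
\item By Lemma~\ref{lem:construction-properties}\ref{item:F-disjoint}, such a part lies in a single $F_{i,c}$.
\item Since $F_{i,c}$ is bipartite, a $k$-regular subgraph of it is balanced and has a perfect matching (Hall/K\"onig).
\item Let $M$ be the union of these perfect matchings. Then $A(M)=0$, and $D$ is the number of singleton parts, so $D\le\ell$. Each part contributes to at most one pair $(i,c)$, so $qX\le\ell$.
\item If $\ell<q\log\log m$, Lemma~\ref{lem:matching-support} gives $X\ge(1-o(1))\log\log m$, hence $\ell\ge(1-o(1))q\log\log m$.
\end{enumerate}

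Your remarks that the bound is uniform in $k$ and that no blow-up is needed are correct. The proof itself must go through this perfect-matching step, not through component coverings.
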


The rest of the paper is organised as follows.
Section~\ref{sec:proofoverview} gives an overview of the proof of
Theorem~\ref{thm:main}. In Section~\ref{sec:complete}, we prove
Theorems~\ref{thm:main}, \ref{cor:arbitrarily-large}, and
\ref{thm:path-uncovered}, together with Proposition~\ref{prop:regular}.
We prove Theorem~\ref{thm:bip} in Section~\ref{sec:bipartite}, and
conclude with several open problems.

\section{Proof overview}\label{sec:proofoverview}
In this section, we give an overview of the proof of
Theorem~\ref{thm:main}. The proof has two parts. We first construct
the colouring using random class-to-colour assignments together with a
sparse auxiliary colouring that separates the monochromatic pieces. We
then analyse monochromatic matchings in the resulting graph to obtain the
desired lower bound.
\smallskip

\noindent\textbf{Step 1: Constructing the colouring.}
We partition the vertex set $V$ into $m$ equal-sized classes
$V_1,\ldots,V_m$. Fix a total order $\prec$ on $V$ such that
$V_i\prec V_j$ whenever $i<j$. Take $q$ base colours.
Independently for each vertex $v\in V$, choose a uniformly random
injection $\phi_v:[m]\hookrightarrow[q]$. For each edge $uv$ with
$u\prec v$ and $u\in V_i$, associate $uv$ with the base colour
$\phi_v(i)$. Thus, for a fixed vertex $v$, all edges from the same
earlier class $V_i$ to $v$ are associated with the same colour
$\phi_v(i)$, while edges from different earlier classes are associated
with different colours.

For each $c\in[q]$, split the edges receiving colour $c$ into $m$
pieces according to the class of their earlier endpoint. These pieces
need not be vertex-disjoint: a vertex may be a later endpoint in one
piece and an earlier endpoint in another. We can show that, by recolouring the edges of a sparse auxiliary graph
$J$ with $o(q)$ fresh colours so that they are properly edge-coloured,
these pieces can be made pairwise vertex-disjoint. For each
$i\in[m]$, let $F_{i,c}$ be the graph formed by the
remaining edges in the $i$th piece. Hence, for each base colour $c$, every nontrivial connected colour-$c$
subgraph lies in a single $F_{i,c}$; see
Figure~\ref{fig:construction-overview}.

\medskip
\noindent\textbf{Step 2: Analysing matchings in a path partition.} 
Let $\mathcal P$ be a partition of $V$ into $\ell$ monochromatic paths.
Since the graphs $F_{1,c},\ldots,F_{m,c}$ are pairwise
vertex-disjoint, every nontrivial path in $\mathcal P$ with base colour $c$ lies in a
unique $F_{i,c}$. Thus each base-coloured path uses
at most one pair $(i,c)$. Choose a maximum matching from every path in
$\mathcal P$, and let $M$ be their union. Then $M$ leaves at most
$\ell$ vertices unmatched.

For each $i$, let $C_i(M)$ be the set of
base colours $c$ for which $M$ contains an edge of $F_{i,c}$. Since each path in $\mathcal P$ contributes to at most
one pair $(i,c)$, we have
$
\sum_{i=1}^m |C_i(M)|\le \ell.
$
Moreover, since $J$ is properly edge-coloured, every
auxiliary colour class is a matching, so every auxiliary-coloured path
contains at most one edge, and hence $M$ contains only a few
auxiliary-coloured edges. Thus, when $\ell$ is small, $M$ covers almost all vertices, its
base-coloured edges are supported on only a few graphs $F_{i,c}$, and it
contains only a few auxiliary-coloured edges.

For a suitable choice of $s$, consider the final $s$ classes
$V_{m-s+1}\cup\cdots\cup V_m$. If a vertex $v$ in these classes is
matched to an earlier class $V_i$ by a base-coloured edge, then
$\phi_v(i)\in C_i(M)$. Since the total size of the sets $C_i(M)$ is
small and the labels $\phi_v$ are random, many vertices in the final
$s$ classes avoid $C_i(M)$ for most earlier indices $i$.  Such vertices
have very few ways to be matched: they cannot be matched to most
earlier classes by base-coloured edges,  and only a few can be
matched by auxiliary-coloured edges, since $M$ contains only a few such
edges. If $\ell$ is too small, there are not enough
remaining edges to match all of them, contradicting the fact that $M$
leaves only a few vertices unmatched.

\begin{figure}[ht]
    \centering

\tikzset{every picture/.style={line width=0.75pt}} 

\begin{tikzpicture}[x=0.75pt,y=0.75pt,yscale=-0.9,xscale=0.9]

\draw  [fill={rgb, 255:red, 80; green, 227; blue, 194 }  ,fill opacity=0.5 ] (417,161.1) .. controls (417,154.97) and (421.97,150) .. (428.1,150) -- (461.4,150) .. controls (467.53,150) and (472.5,154.97) .. (472.5,161.1) -- (472.5,260.4) .. controls (472.5,266.53) and (467.53,271.5) .. (461.4,271.5) -- (428.1,271.5) .. controls (421.97,271.5) and (417,266.53) .. (417,260.4) -- cycle ;
\draw [color={rgb, 255:red, 144; green, 19; blue, 254 }  ,draw opacity=1 ]   (431.7,180.8) -- (457.7,198.8) ;
\draw [color={rgb, 255:red, 144; green, 19; blue, 254 }  ,draw opacity=1 ]   (432.2,209.5) -- (458.2,226.3) ;
\draw [color={rgb, 255:red, 144; green, 19; blue, 254 }  ,draw opacity=1 ]   (431.7,209.8) -- (457.7,199) ;
\draw [color={rgb, 255:red, 144; green, 19; blue, 254 }  ,draw opacity=1 ]   (431.7,237.8) -- (458.2,226.3) ;
\draw [color={rgb, 255:red, 144; green, 19; blue, 254 }  ,draw opacity=1 ]   (431.7,180.8) -- (458.2,225.1) ;
\draw [color={rgb, 255:red, 144; green, 19; blue, 254 }  ,draw opacity=1 ]   (431.7,237.8) -- (457.7,198.8) ;
\draw [color={rgb, 255:red, 155; green, 155; blue, 155 }  ,draw opacity=1 ]   (299.7,164.6) -- (299.7,184.8) ;
\draw [color={rgb, 255:red, 248; green, 231; blue, 28 }  ,draw opacity=1 ]   (206.3,241.2) -- (299.7,184.8) ;
\draw [color={rgb, 255:red, 248; green, 231; blue, 28 }  ,draw opacity=1 ]   (206.3,253.2) -- (299.7,184.8) ;
\draw [color={rgb, 255:red, 144; green, 19; blue, 254 }  ,draw opacity=1 ]   (136.7,241.2) -- (299.7,184.8) ;
\draw [color={rgb, 255:red, 144; green, 19; blue, 254 }  ,draw opacity=1 ][fill={rgb, 255:red, 144; green, 19; blue, 254 }  ,fill opacity=1 ]   (136.7,253.2) -- (299.7,184.8) ;
\draw [color={rgb, 255:red, 208; green, 2; blue, 27 }  ,draw opacity=1 ][fill={rgb, 255:red, 208; green, 2; blue, 27 }  ,fill opacity=1 ]   (66.3,253.2) -- (299.7,184.8) ;
\draw [color={rgb, 255:red, 208; green, 2; blue, 27 }  ,draw opacity=1 ][fill={rgb, 255:red, 208; green, 2; blue, 27 }  ,fill opacity=1 ]   (66.1,239.6) -- (299.7,184.8) ;
\draw  [fill={rgb, 255:red, 80; green, 227; blue, 194 }  ,fill opacity=0.5 ] (47.5,157.28) .. controls (47.5,153.26) and (50.76,150) .. (54.78,150) -- (76.62,150) .. controls (80.64,150) and (83.9,153.26) .. (83.9,157.28) -- (83.9,264.22) .. controls (83.9,268.24) and (80.64,271.5) .. (76.62,271.5) -- (54.78,271.5) .. controls (50.76,271.5) and (47.5,268.24) .. (47.5,264.22) -- cycle ;
\draw  [fill={rgb, 255:red, 80; green, 227; blue, 194 }  ,fill opacity=0.5 ] (117.5,157.28) .. controls (117.5,153.26) and (120.76,150) .. (124.78,150) -- (146.62,150) .. controls (150.64,150) and (153.9,153.26) .. (153.9,157.28) -- (153.9,264.22) .. controls (153.9,268.24) and (150.64,271.5) .. (146.62,271.5) -- (124.78,271.5) .. controls (120.76,271.5) and (117.5,268.24) .. (117.5,264.22) -- cycle ;
\draw  [fill={rgb, 255:red, 80; green, 227; blue, 194 }  ,fill opacity=0.5 ] (187.5,157.28) .. controls (187.5,153.26) and (190.76,150) .. (194.78,150) -- (216.62,150) .. controls (220.64,150) and (223.9,153.26) .. (223.9,157.28) -- (223.9,264.22) .. controls (223.9,268.24) and (220.64,271.5) .. (216.62,271.5) -- (194.78,271.5) .. controls (190.76,271.5) and (187.5,268.24) .. (187.5,264.22) -- cycle ;
\draw  [fill={rgb, 255:red, 80; green, 227; blue, 194 }  ,fill opacity=0.5 ] (281.5,157.28) .. controls (281.5,153.26) and (284.76,150) .. (288.78,150) -- (310.62,150) .. controls (314.64,150) and (317.9,153.26) .. (317.9,157.28) -- (317.9,264.22) .. controls (317.9,268.24) and (314.64,271.5) .. (310.62,271.5) -- (288.78,271.5) .. controls (284.76,271.5) and (281.5,268.24) .. (281.5,264.22) -- cycle ;
\draw  [fill={rgb, 255:red, 0; green, 0; blue, 0 }  ,fill opacity=1 ] (238.3,216.65) .. controls (238.3,216.4) and (238.79,216.2) .. (239.41,216.2) .. controls (240.02,216.2) and (240.51,216.4) .. (240.51,216.65) .. controls (240.51,216.9) and (240.02,217.1) .. (239.41,217.1) .. controls (238.79,217.1) and (238.3,216.9) .. (238.3,216.65) -- cycle ;
\draw  [fill={rgb, 255:red, 0; green, 0; blue, 0 }  ,fill opacity=1 ] (251.19,216.65) .. controls (251.19,216.4) and (251.69,216.2) .. (252.3,216.2) .. controls (252.91,216.2) and (253.41,216.4) .. (253.41,216.65) .. controls (253.41,216.9) and (252.91,217.1) .. (252.3,217.1) .. controls (251.69,217.1) and (251.19,216.9) .. (251.19,216.65) -- cycle ;
\draw  [fill={rgb, 255:red, 0; green, 0; blue, 0 }  ,fill opacity=1 ] (264.09,216.65) .. controls (264.09,216.4) and (264.58,216.2) .. (265.19,216.2) .. controls (265.81,216.2) and (266.3,216.4) .. (266.3,216.65) .. controls (266.3,216.9) and (265.81,217.1) .. (265.19,217.1) .. controls (264.58,217.1) and (264.09,216.9) .. (264.09,216.65) -- cycle ;
\draw  [fill={rgb, 255:red, 0; green, 0; blue, 0 }  ,fill opacity=1 ] (65.1,253.2) .. controls (65.1,252.54) and (65.64,252) .. (66.3,252) .. controls (66.96,252) and (67.5,252.54) .. (67.5,253.2) .. controls (67.5,253.86) and (66.96,254.4) .. (66.3,254.4) .. controls (65.64,254.4) and (65.1,253.86) .. (65.1,253.2) -- cycle ;
\draw  [fill={rgb, 255:red, 0; green, 0; blue, 0 }  ,fill opacity=1 ] (298.5,164.8) .. controls (298.5,164.14) and (299.04,163.6) .. (299.7,163.6) .. controls (300.36,163.6) and (300.9,164.14) .. (300.9,164.8) .. controls (300.9,165.46) and (300.36,166) .. (299.7,166) .. controls (299.04,166) and (298.5,165.46) .. (298.5,164.8) -- cycle ;
\draw  [fill={rgb, 255:red, 0; green, 0; blue, 0 }  ,fill opacity=1 ] (64.9,239.6) .. controls (64.9,238.94) and (65.44,238.4) .. (66.1,238.4) .. controls (66.76,238.4) and (67.3,238.94) .. (67.3,239.6) .. controls (67.3,240.26) and (66.76,240.8) .. (66.1,240.8) .. controls (65.44,240.8) and (64.9,240.26) .. (64.9,239.6) -- cycle ;
\draw  [fill={rgb, 255:red, 0; green, 0; blue, 0 }  ,fill opacity=1 ] (135.5,253.2) .. controls (135.5,252.54) and (136.04,252) .. (136.7,252) .. controls (137.36,252) and (137.9,252.54) .. (137.9,253.2) .. controls (137.9,253.86) and (137.36,254.4) .. (136.7,254.4) .. controls (136.04,254.4) and (135.5,253.86) .. (135.5,253.2) -- cycle ;
\draw  [fill={rgb, 255:red, 0; green, 0; blue, 0 }  ,fill opacity=1 ] (135.5,241.2) .. controls (135.5,240.54) and (136.04,240) .. (136.7,240) .. controls (137.36,240) and (137.9,240.54) .. (137.9,241.2) .. controls (137.9,241.86) and (137.36,242.4) .. (136.7,242.4) .. controls (136.04,242.4) and (135.5,241.86) .. (135.5,241.2) -- cycle ;
\draw  [fill={rgb, 255:red, 0; green, 0; blue, 0 }  ,fill opacity=1 ] (205.1,253.2) .. controls (205.1,252.54) and (205.64,252) .. (206.3,252) .. controls (206.96,252) and (207.5,252.54) .. (207.5,253.2) .. controls (207.5,253.86) and (206.96,254.4) .. (206.3,254.4) .. controls (205.64,254.4) and (205.1,253.86) .. (205.1,253.2) -- cycle ;
\draw  [fill={rgb, 255:red, 0; green, 0; blue, 0 }  ,fill opacity=1 ] (205.1,241.2) .. controls (205.1,240.54) and (205.64,240) .. (206.3,240) .. controls (206.96,240) and (207.5,240.54) .. (207.5,241.2) .. controls (207.5,241.86) and (206.96,242.4) .. (206.3,242.4) .. controls (205.64,242.4) and (205.1,241.86) .. (205.1,241.2) -- cycle ;
\draw  [fill={rgb, 255:red, 0; green, 0; blue, 0 }  ,fill opacity=1 ] (298.5,184.8) .. controls (298.5,184.14) and (299.04,183.6) .. (299.7,183.6) .. controls (300.36,183.6) and (300.9,184.14) .. (300.9,184.8) .. controls (300.9,185.46) and (300.36,186) .. (299.7,186) .. controls (299.04,186) and (298.5,185.46) .. (298.5,184.8) -- cycle ;
\draw  [fill={rgb, 255:red, 0; green, 0; blue, 0 }  ,fill opacity=1 ] (65.1,166.2) .. controls (65.1,165.54) and (65.64,165) .. (66.3,165) .. controls (66.96,165) and (67.5,165.54) .. (67.5,166.2) .. controls (67.5,166.86) and (66.96,167.4) .. (66.3,167.4) .. controls (65.64,167.4) and (65.1,166.86) .. (65.1,166.2) -- cycle ;
\draw  [fill={rgb, 255:red, 0; green, 0; blue, 0 }  ,fill opacity=1 ] (65.1,184.2) .. controls (65.1,183.54) and (65.64,183) .. (66.3,183) .. controls (66.96,183) and (67.5,183.54) .. (67.5,184.2) .. controls (67.5,184.86) and (66.96,185.4) .. (66.3,185.4) .. controls (65.64,185.4) and (65.1,184.86) .. (65.1,184.2) -- cycle ;
\draw  [fill={rgb, 255:red, 0; green, 0; blue, 0 }  ,fill opacity=1 ] (135.6,167.2) .. controls (135.6,166.54) and (136.14,166) .. (136.8,166) .. controls (137.46,166) and (138,166.54) .. (138,167.2) .. controls (138,167.86) and (137.46,168.4) .. (136.8,168.4) .. controls (136.14,168.4) and (135.6,167.86) .. (135.6,167.2) -- cycle ;
\draw  [fill={rgb, 255:red, 0; green, 0; blue, 0 }  ,fill opacity=1 ] (135.6,184.7) .. controls (135.6,184.04) and (136.14,183.5) .. (136.8,183.5) .. controls (137.46,183.5) and (138,184.04) .. (138,184.7) .. controls (138,185.36) and (137.46,185.9) .. (136.8,185.9) .. controls (136.14,185.9) and (135.6,185.36) .. (135.6,184.7) -- cycle ;
\draw  [fill={rgb, 255:red, 0; green, 0; blue, 0 }  ,fill opacity=1 ] (205.1,165.2) .. controls (205.1,164.54) and (205.64,164) .. (206.3,164) .. controls (206.96,164) and (207.5,164.54) .. (207.5,165.2) .. controls (207.5,165.86) and (206.96,166.4) .. (206.3,166.4) .. controls (205.64,166.4) and (205.1,165.86) .. (205.1,165.2) -- cycle ;
\draw  [fill={rgb, 255:red, 0; green, 0; blue, 0 }  ,fill opacity=1 ] (205.1,184.2) .. controls (205.1,183.54) and (205.64,183) .. (206.3,183) .. controls (206.96,183) and (207.5,183.54) .. (207.5,184.2) .. controls (207.5,184.86) and (206.96,185.4) .. (206.3,185.4) .. controls (205.64,185.4) and (205.1,184.86) .. (205.1,184.2) -- cycle ;
\draw  [fill={rgb, 255:red, 80; green, 227; blue, 194 }  ,fill opacity=0.5 ] (511,161.1) .. controls (511,154.97) and (515.97,150) .. (522.1,150) -- (555.4,150) .. controls (561.53,150) and (566.5,154.97) .. (566.5,161.1) -- (566.5,260.4) .. controls (566.5,266.53) and (561.53,271.5) .. (555.4,271.5) -- (522.1,271.5) .. controls (515.97,271.5) and (511,266.53) .. (511,260.4) -- cycle ;
\draw  [fill={rgb, 255:red, 80; green, 227; blue, 194 }  ,fill opacity=0.5 ] (605,161.1) .. controls (605,154.97) and (609.97,150) .. (616.1,150) -- (649.4,150) .. controls (655.53,150) and (660.5,154.97) .. (660.5,161.1) -- (660.5,260.4) .. controls (660.5,266.53) and (655.53,271.5) .. (649.4,271.5) -- (616.1,271.5) .. controls (609.97,271.5) and (605,266.53) .. (605,260.4) -- cycle ;
\draw [line width=1.5]  [dash pattern={on 1.69pt off 2.76pt}]  (484.67,216) -- (499.5,216) ;
\draw [line width=1.5]  [dash pattern={on 1.69pt off 2.76pt}]  (577.67,216) -- (592.5,216) ;
\draw  [fill={rgb, 255:red, 0; green, 0; blue, 0 }  ,fill opacity=1 ] (430.5,180.8) .. controls (430.5,180.14) and (431.04,179.6) .. (431.7,179.6) .. controls (432.36,179.6) and (432.9,180.14) .. (432.9,180.8) .. controls (432.9,181.46) and (432.36,182) .. (431.7,182) .. controls (431.04,182) and (430.5,181.46) .. (430.5,180.8) -- cycle ;
\draw  [fill={rgb, 255:red, 0; green, 0; blue, 0 }  ,fill opacity=1 ] (430.5,209.8) .. controls (430.5,209.14) and (431.04,208.6) .. (431.7,208.6) .. controls (432.36,208.6) and (432.9,209.14) .. (432.9,209.8) .. controls (432.9,210.46) and (432.36,211) .. (431.7,211) .. controls (431.04,211) and (430.5,210.46) .. (430.5,209.8) -- cycle ;
\draw  [fill={rgb, 255:red, 0; green, 0; blue, 0 }  ,fill opacity=1 ] (430.5,237.8) .. controls (430.5,237.14) and (431.04,236.6) .. (431.7,236.6) .. controls (432.36,236.6) and (432.9,237.14) .. (432.9,237.8) .. controls (432.9,238.46) and (432.36,239) .. (431.7,239) .. controls (431.04,239) and (430.5,238.46) .. (430.5,237.8) -- cycle ;
\draw  [fill={rgb, 255:red, 0; green, 0; blue, 0 }  ,fill opacity=1 ] (456.5,198.8) .. controls (456.5,198.14) and (457.04,197.6) .. (457.7,197.6) .. controls (458.36,197.6) and (458.9,198.14) .. (458.9,198.8) .. controls (458.9,199.46) and (458.36,200) .. (457.7,200) .. controls (457.04,200) and (456.5,199.46) .. (456.5,198.8) -- cycle ;
\draw  [fill={rgb, 255:red, 0; green, 0; blue, 0 }  ,fill opacity=1 ] (457,226.3) .. controls (457,225.64) and (457.54,225.1) .. (458.2,225.1) .. controls (458.86,225.1) and (459.4,225.64) .. (459.4,226.3) .. controls (459.4,226.96) and (458.86,227.5) .. (458.2,227.5) .. controls (457.54,227.5) and (457,226.96) .. (457,226.3) -- cycle ;
\draw [color={rgb, 255:red, 144; green, 19; blue, 254 }  ,draw opacity=1 ]   (525.2,181.3) -- (551.2,199.3) ;
\draw [color={rgb, 255:red, 144; green, 19; blue, 254 }  ,draw opacity=1 ]   (525.7,210) -- (551.7,226.8) ;
\draw [color={rgb, 255:red, 144; green, 19; blue, 254 }  ,draw opacity=1 ]   (525.2,210.3) -- (551.2,199.5) ;
\draw [color={rgb, 255:red, 144; green, 19; blue, 254 }  ,draw opacity=1 ]   (525.2,238.3) -- (551.7,226.8) ;
\draw [color={rgb, 255:red, 144; green, 19; blue, 254 }  ,draw opacity=1 ]   (525.2,181.3) -- (551.7,225.6) ;
\draw [color={rgb, 255:red, 144; green, 19; blue, 254 }  ,draw opacity=1 ]   (525.2,238.3) -- (551.2,199.3) ;
\draw  [fill={rgb, 255:red, 0; green, 0; blue, 0 }  ,fill opacity=1 ] (524,181.3) .. controls (524,180.64) and (524.54,180.1) .. (525.2,180.1) .. controls (525.86,180.1) and (526.4,180.64) .. (526.4,181.3) .. controls (526.4,181.96) and (525.86,182.5) .. (525.2,182.5) .. controls (524.54,182.5) and (524,181.96) .. (524,181.3) -- cycle ;
\draw  [fill={rgb, 255:red, 0; green, 0; blue, 0 }  ,fill opacity=1 ] (524,210.3) .. controls (524,209.64) and (524.54,209.1) .. (525.2,209.1) .. controls (525.86,209.1) and (526.4,209.64) .. (526.4,210.3) .. controls (526.4,210.96) and (525.86,211.5) .. (525.2,211.5) .. controls (524.54,211.5) and (524,210.96) .. (524,210.3) -- cycle ;
\draw  [fill={rgb, 255:red, 0; green, 0; blue, 0 }  ,fill opacity=1 ] (524,238.3) .. controls (524,237.64) and (524.54,237.1) .. (525.2,237.1) .. controls (525.86,237.1) and (526.4,237.64) .. (526.4,238.3) .. controls (526.4,238.96) and (525.86,239.5) .. (525.2,239.5) .. controls (524.54,239.5) and (524,238.96) .. (524,238.3) -- cycle ;
\draw  [fill={rgb, 255:red, 0; green, 0; blue, 0 }  ,fill opacity=1 ] (550,199.3) .. controls (550,198.64) and (550.54,198.1) .. (551.2,198.1) .. controls (551.86,198.1) and (552.4,198.64) .. (552.4,199.3) .. controls (552.4,199.96) and (551.86,200.5) .. (551.2,200.5) .. controls (550.54,200.5) and (550,199.96) .. (550,199.3) -- cycle ;
\draw  [fill={rgb, 255:red, 0; green, 0; blue, 0 }  ,fill opacity=1 ] (550.5,226.8) .. controls (550.5,226.14) and (551.04,225.6) .. (551.7,225.6) .. controls (552.36,225.6) and (552.9,226.14) .. (552.9,226.8) .. controls (552.9,227.46) and (552.36,228) .. (551.7,228) .. controls (551.04,228) and (550.5,227.46) .. (550.5,226.8) -- cycle ;
\draw [color={rgb, 255:red, 144; green, 19; blue, 254 }  ,draw opacity=1 ]   (642.7,181.8) -- (616.7,199.8) ;
\draw [color={rgb, 255:red, 144; green, 19; blue, 254 }  ,draw opacity=1 ]   (642.2,210.5) -- (616.2,227.3) ;
\draw [color={rgb, 255:red, 144; green, 19; blue, 254 }  ,draw opacity=1 ]   (642.7,210.8) -- (616.7,200) ;
\draw [color={rgb, 255:red, 144; green, 19; blue, 254 }  ,draw opacity=1 ]   (642.7,238.8) -- (616.2,227.3) ;
\draw  [fill={rgb, 255:red, 0; green, 0; blue, 0 }  ,fill opacity=1 ] (643.9,181.8) .. controls (643.9,181.14) and (643.36,180.6) .. (642.7,180.6) .. controls (642.04,180.6) and (641.5,181.14) .. (641.5,181.8) .. controls (641.5,182.46) and (642.04,183) .. (642.7,183) .. controls (643.36,183) and (643.9,182.46) .. (643.9,181.8) -- cycle ;
\draw  [fill={rgb, 255:red, 0; green, 0; blue, 0 }  ,fill opacity=1 ] (643.9,210.8) .. controls (643.9,210.14) and (643.36,209.6) .. (642.7,209.6) .. controls (642.04,209.6) and (641.5,210.14) .. (641.5,210.8) .. controls (641.5,211.46) and (642.04,212) .. (642.7,212) .. controls (643.36,212) and (643.9,211.46) .. (643.9,210.8) -- cycle ;
\draw  [fill={rgb, 255:red, 0; green, 0; blue, 0 }  ,fill opacity=1 ] (643.9,238.8) .. controls (643.9,238.14) and (643.36,237.6) .. (642.7,237.6) .. controls (642.04,237.6) and (641.5,238.14) .. (641.5,238.8) .. controls (641.5,239.46) and (642.04,240) .. (642.7,240) .. controls (643.36,240) and (643.9,239.46) .. (643.9,238.8) -- cycle ;
\draw  [fill={rgb, 255:red, 0; green, 0; blue, 0 }  ,fill opacity=1 ] (617.9,199.8) .. controls (617.9,199.14) and (617.36,198.6) .. (616.7,198.6) .. controls (616.04,198.6) and (615.5,199.14) .. (615.5,199.8) .. controls (615.5,200.46) and (616.04,201) .. (616.7,201) .. controls (617.36,201) and (617.9,200.46) .. (617.9,199.8) -- cycle ;
\draw  [fill={rgb, 255:red, 0; green, 0; blue, 0 }  ,fill opacity=1 ] (617.4,227.3) .. controls (617.4,226.64) and (616.86,226.1) .. (616.2,226.1) .. controls (615.54,226.1) and (615,226.64) .. (615,227.3) .. controls (615,227.96) and (615.54,228.5) .. (616.2,228.5) .. controls (616.86,228.5) and (617.4,227.96) .. (617.4,227.3) -- cycle ;

\draw (56.5,282) node [anchor=north west][inner sep=0.75pt]  [font=\normalsize] [align=left] {$V_1$};
\draw (127,282) node [anchor=north west][inner sep=0.75pt]  [font=\normalsize] [align=left] {$V_2$};
\draw (197,282) node [anchor=north west][inner sep=0.75pt]  [font=\normalsize] [align=left] {$V_3$};
\draw (294.5,282) node [anchor=north west][inner sep=0.75pt]  [font=\normalsize] [align=left] {$V_j$};
\draw (85,215) node [anchor=north west][inner sep=0.75pt]  [font=\scriptsize] [align=left] {$\phi_v(1)$};
\draw (155,243) node [anchor=north west][inner sep=0.75pt]  [font=\scriptsize] [align=left] {$\phi_v(2)$};
\draw (225,243) node [anchor=north west][inner sep=0.75pt]  [font=\scriptsize] [align=left] {$\phi_v(3)$};
\draw (320,167.67) node [anchor=north west][inner sep=0.75pt]  [font=\scriptsize] [align=left] {$\phi_v(j)$};
\draw (301.7,186.6) node [anchor=north west][inner sep=0.75pt]  [font=\scriptsize] [align=left] {$v$};
\draw (435,282) node [anchor=north west][inner sep=0.75pt]  [font=\normalsize] [align=left] {$F_{1,c}$};
\draw (530,282) node [anchor=north west][inner sep=0.75pt]  [font=\normalsize] [align=left] {$F_{i,c}$};
\draw (622,282) node [anchor=north west][inner sep=0.75pt]  [font=\normalsize] [align=left] {$F_{m,c}$};
\draw (40,310) node [anchor=north west][inner sep=0.75pt]   [align=left] {$\phi_v(1),\phi_v(2),\phi_v(3),\ldots,\phi_v(j)$ are distinct};
\draw (460,310) node [anchor=north west][inner sep=0.75pt]   [align=left] {$
F_{1,c}\mathbin{\dot\cup}F_{2,c}
\mathbin{\dot\cup}\cdots
\mathbin{\dot\cup}F_{m,c}$};

\end{tikzpicture}
\caption{The colouring in Step~1. Left: the base colour associated with
an edge from $V_i$ to $v$ is $\phi_v(i)$. Right: the colour-$c$ graph
is the vertex-disjoint union of $F_{1,c},\ldots,F_{m,c}$.}\label{fig:construction-overview}
\end{figure}

\section{The complete-graph case}\label{sec:complete}

\subsection{Construction and basic properties}
Let $q$ be a sufficiently large integer, and regard $[q]$ as the set of
base colours. Asymptotic notation refers to the
limit $q\to\infty$. Set
$
m=\left\lfloor{q^{1/5}}/{\log q}\right\rfloor.
$ Choose an integer $T$ between $ qm^3(\log m)^2$ and $2 qm^3(\log m)^2$.
Let $V_1,\ldots,V_m$ be pairwise disjoint sets of size $T$. Put $V=V_1\mathbin{\dot\cup}\cdots\mathbin{\dot\cup}V_m$, let $K_V$
denote the complete graph on $V$, and set $n=|V|=mT$.

The construction can be viewed roughly as follows. For each colour
$c\in[q]$, the colour-$c$ subgraph is the union
$F_{1,c}\cup\cdots\cup F_{m,c}$. The graph $F_{i,c}$ is  bipartite, with one side almost spanning
$V_i$ and the other contained in $V_i\cup\cdots\cup V_m$.
For each $a>i$, its restriction to $(V_i,V_a)$ is roughly a complete
bipartite graph with parts of sizes $(1-o(1))T$ and $T/q$. We also introduce a sparse graph $J$, whose edges are coloured with
$o(q)$ new colours, called auxiliary colours. Its purpose is to
separate these pieces: for every $c\in[q]$, the graphs
$F_{1,c},\ldots,F_{m,c}$ are pairwise vertex-disjoint.

Fix a total order $\prec$ on $V$ such that, for every $i<j$, every
$u\in V_i$, and every $v\in V_j$, we have $u\prec v$. Within each class,
the vertices are ordered arbitrarily.  For each vertex, we associate a distinct colour with each of the $m$
classes. To this end, independently for each $v\in V$, choose a uniformly
random injection $\phi_v:[m]\hookrightarrow[q]$. Thus $\phi_v(i)$ is the colour associated with the pair $(v,V_i)$ and will be the colour of almost all edges $uv$ with $u\in V_i$ and $u\prec v$. We call $\phi_v$ the label of $v$.
For
each $v\in V$, let $\operatorname{part}(v)$ denote the unique $i\in[m]$
such that $v\in V_i$.

\begin{construction}\label{con:colouring}
We construct an edge-colouring of $K_V$ in three
steps.

\begin{enumerate}
\item Define a graph $J$ with vertex set $V$. For each pair $x\prec y$, include
the edge $xy$ in $J$ if
$\phi_y\bigl(\operatorname{part}(x)\bigr)\in\im\phi_x$; that is, if the colour associated with the pair
$(y,V_{\operatorname{part}(x)})$ is also assigned by $x$ to one of the
classes.

\item For each edge $xy$ with $x\prec y$ and $xy\notin E(J)$, we classify it according to
$\bigl(\operatorname{part}(x),\phi_y(\operatorname{part}(x))\bigr)$.
For each $i\in[m]$ and $c\in[q]$, let
$$
E(F_{i,c})
=
\left\{
xy\in\binom{V}{2}:
x\prec y,\ x\in V_i,\ xy\notin E(J),\ \phi_y(i)=c
\right\},
$$
and let $V(F_{i,c})$ consist of the endpoints of these edges.

\item Colour every edge of $F_{i,c}$ with the base colour $c$. Properly
edge-colour $J$ with at most $\Delta(J)+1$ fresh auxiliary colours,
using Vizing's theorem.
\end{enumerate}
\end{construction}

By definition, the edges of the graphs $F_{i,c}$ partition
$E(K_V)\setminus E(J)$, so the construction gives an edge-colouring of
$K_V$.
The following lemma records the main properties
of the construction.

\begin{lemma}\label{lem:construction-properties}
With high probability, the colouring in Construction~\ref{con:colouring}
has the following properties.
\begin{enumerate}[label=\textup{(\roman*)}]

\item\label{item:J-degree}
Every vertex $v\in V$ satisfies
$
d_J(v)=(1+o(1))m^2T/q.
$
In particular,
$
\Delta(J)=O(q/(\log q)^3),
$
and the construction uses $(1+o(1))q$ colours in total.

\item\label{item:F-bipartite}
Simultaneously for all $i\in[m]$ and $c\in[q]$, the graph $F_{i,c}$ is
bipartite. Moreover, for every $a>i$, the restriction of $F_{i,c}$ to
$(V_i,V_a)$ is a complete bipartite graph whose parts have sizes
$(1+o(1))T$ and $(1+o(1))T/q$, respectively.

\item\label{item:auxiliary-matching}
Every auxiliary colour class is a matching.

\item\label{item:F-disjoint}
For each $c\in[q]$, the graphs $F_{1,c},\ldots,F_{m,c}$ are pairwise
vertex-disjoint. Consequently, every connected monochromatic subgraph
on at least two vertices of colour $c$ lies in a unique $F_{i,c}$.
\end{enumerate}
\end{lemma}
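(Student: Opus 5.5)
The plan is to handle the deterministic parts, \ref{item:auxiliary-matching} and \ref{item:F-disjoint} together with the bipartite structure in \ref{item:F-bipartite}, straight from the definitions. The quantitative parts, the degree in \ref{item:J-degree} and the part sizes in \ref{item:F-bipartite}, will follow from Chernoff bounds and a union bound. The key structural observation is that an edge $xy$ with $x\prec y$ lies in $J$ exactly when $\phi_y(\operatorname{part}(x))\in\im\phi_x$. So if a vertex $x$ "uses" colour $c$ in its own label, then $x$ cannot be the earlier endpoint of any colour-$c$ base edge.

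For \ref{item:J-degree}, fix $v$ and condition on $\phi_v$.
\begin{itemize}
\item For $x\prec v$, the event $xv\in E(J)$ is $\{\phi_v(\operatorname{part}(x))\in\im\phi_x\}$, which depends only on $\phi_x$ and has probability exactly $m/q$.
\item For $x\succ v$, the event is $\{\phi_x(\operatorname{part}(v))\in\im\phi_v\}$, which depends only on $\phi_x$ and again has probability $m/q$.
\end{itemize}
Hence, conditionally, $d_J(v)$ is a sum of $n-1$ independent indicators with mean $(n-1)m/q=(1+o(1))m^2T/q$. This mean is at least $m^5(\log m)^2$, so Chernoff gives failure probability $\exp(-\omega(\log n))$, because $n=mT$ is polynomial in $m$. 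A union bound over $v$ then gives the degree estimate. Since $m^2T/q\le 2m^5(\log m)^2\le 2q(\log m)^2/(\log q)^5$, we get $\Delta(J)=O(q/(\log q)^3)$, so the total number of colours is $q+\Delta(J)+1=(1+o(1))q$. Item \ref{item:auxiliary-matching} is immediate from the proper edge-colouring of $J$ provided by Vizing's theorem.

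For \ref{item:F-disjoint}, suppose $v\in V(F_{i,c})\cap V(F_{j,c})$ with $i\neq j$, and split according to whether $v$ is the earlier or the later endpoint of the witnessing edges.
\begin{itemize}
\item If $v$ is the earlier endpoint in both, then $v\in V_i\cap V_j$, which is impossible.
\item If $v$ is the later endpoint in both, then $\phi_v(i)=\phi_v(j)=c$, contradicting injectivity of $\phi_v$.
\item In the mixed case, say $v\in V_i$ is the earlier endpoint of $vy\in E(F_{i,c})$ and the later endpoint of an edge from $V_j$ with $\phi_v(j)=c$. Then $\phi_y(\operatorname{part}(v))=c\in\im\phi_v$, so $vy\in E(J)$, a contradiction. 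The case with the roles of $i$ and $j$ swapped is symmetric.
\end{itemize}
For the consequence, note that adjacent colour-$c$ edges share a vertex and therefore lie in the same $F_{i,c}$. Connectivity then forces a single index $i$.

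For \ref{item:F-bipartite}, let $A$ be the set of earlier endpoints of edges of $F_{i,c}$, so $A\subseteq V_i$, and let $B$ be the set of later endpoints, all of which satisfy $\phi_y(i)=c$. The same argument as in the mixed case above shows $A\cap B=\emptyset$, so $F_{i,c}$ is bipartite with parts $A$ and $B$. For $a>i$, the edges of $F_{i,c}$ between $V_i$ and $V_a$ are exactly the pairs $xy$ with $x\in V_i$, $c\notin\im\phi_x$, $y\in V_a$ and $\phi_y(i)=c$: indeed, when $\phi_y(i)=c$, we have $xy\in E(J)$ if and only if $c\in\im\phi_x$. Thus the restriction is complete bipartite between
\[
X_{i,c}=\{x\in V_i:c\notin\im\phi_x\}\quad\text{and}\quad Y_{a,i,c}=\{y\in V_a:\phi_y(i)=c\}.
\]
These sets are sums of independent indicators with means $T(1-m/q)$ and $T/q\ge m^3(\log m)^2$, respectively. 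Chernoff bounds with failure probability $\exp(-\omega(\log q))$, together with a union bound over the $O(m^2q)$ triples $(i,a,c)$, give sizes $(1+o(1))T$ and $(1+o(1))T/q$.

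The only step I expect to need care is choosing the right description of the parts in \ref{item:F-bipartite}, namely $X_{i,c}$ rather than all of $V_i$, and verifying that the Chernoff exponents dominate the union bounds with the chosen parameters $m$ and $T$. Everything else is deterministic bookkeeping from the definition of $J$.
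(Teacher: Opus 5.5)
Your proposal is correct and follows essentially the same route as the paper. The paper likewise conditions on $\phi_v$ to get a binomial $J$-degree with mean $(n-1)m/q$, identifies the $(V_i,V_a)$ restriction of $F_{i,c}$ as the complete bipartite graph between $\{x\in V_i:c\notin\im\phi_x\}$ and $\{y\in V_a:\phi_y(i)=c\}$, bounds their sizes with Chernoff plus a union bound, and uses the same earlier/later-endpoint argument for vertex-disjointness. Your three-case split and your explicit argument for the ``consequently'' clause are only cosmetic differences. One small correction: the colour count should read ``at most $q+\Delta(J)+1$'', since Vizing's theorem only gives an upper bound.
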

\begin{proof}
For \ref{item:J-degree}, fix $x\in V$ and condition on $\phi_x$. For
each $y\ne x$, the event $xy\in E(J)$ has probability $m/q$, and these
events are conditionally independent. Hence
$d_J(x)\sim\operatorname{Bin}(n-1,m/q)$, with mean
$\mu=(n-1)m/q=(1+o(1))m^2T/q$. Since $\mu\gg\log n$, Chernoff's
inequality and a union bound give
$d_J(x)=(1+o(1))m^2T/q$ simultaneously for all $x\in V$ with high
probability. In particular,
$\Delta(J)=O(m^2T/q)=O(q/(\log q)^3)$. By Vizing's theorem, the number of auxiliary colours is therefore
$o(q)$, so the construction uses $(1+o(1))q$ colours in total.

For \ref{item:F-bipartite}, an edge $xy\in F_{i,c}$ with $x\prec y$
satisfies $x\in V_i$, $c\notin\im\phi_x$, and $\phi_y(i)=c$. Hence,
noting that isolated vertices are not included in $F_{i,c}$ by definition,
the graph $F_{i,c}$ is bipartite with parts contained in
$\{x\in V_i:c\notin\im\phi_x\}$ and
$\{y\in V_i\cup\cdots\cup V_m:\phi_y(i)=c\}$. These two sets are disjoint, since $\phi_y(i)=c$ implies
$c\in\operatorname{im}\phi_y$. For $a>i$, its restriction to $(V_i,V_a)$ is
the complete bipartite graph between
$\{x\in V_i:c\notin\im\phi_x\}$ and
$\{y\in V_a:\phi_y(i)=c\}$. The two sets have expected sizes
$(1-m/q)T=(1-o(1))T$ and $T/q$, respectively. Since $T/q\gg\log(mq)$,
Chernoff's inequality and a union bound over all $i,c,a$ give the
claimed sizes simultaneously with high probability.

Property~\ref{item:auxiliary-matching} follows immediately from the
proper edge-colouring of $J$.

For \ref{item:F-disjoint}, fix $c\in[q]$. Suppose for a contradiction
that $v\in V(F_{i,c})\cap V(F_{j,c})$ for distinct $i,j$. If $v$ is an earlier endpoint of an edge in $F_{i,c}$, then $v\in V_i$,
and since this edge lies outside $J$, we have $c\notin\im\phi_v$.
Hence $v$ cannot be a later endpoint of an edge in $F_{j,c}$, since
that would require $\phi_v(j)=c$. Moreover, since $i\ne j$, we have
$v\notin V_j$, so $v$ cannot be an earlier endpoint in $F_{j,c}$
either. Thus $v$ cannot be an earlier endpoint in either graph.
Consequently, it must be a later endpoint in both, so
$\phi_v(i)=\phi_v(j)=c$, contradicting the injectivity. 
\end{proof}
For a fixed $v\in V$ and a colour set $D_i\subseteq[q]$, the condition
$\phi_v(i)\notin D_i$ means that the colour associated with the pair
$(v,V_i)$ avoids $D_i$. In particular, $v$ cannot be the later endpoint
of an edge in $F_{i,c}$ for any $c\in D_i$. Since the labels $\phi_v$, $v\in V$, are identically distributed,
let $\Phi:[m]\hookrightarrow[q]$ denote a uniformly random injection with this
common distribution. 
We record two properties of the labels. The sets $D_i$ below are not required to be distinct for
different $i$.

\begin{lemma}\label{lem:label-properties}
The following properties hold.
\begin{enumerate}[label=\textup{(\roman*)}]
\item\label{item:avoidance}
For every $I\subseteq[m]$ and family $(D_i)_{i\in I}$ of subsets
of $[q]$ satisfying $|D_i|\le q-m$ for all $i\in I$,
$$
\Pr\bigl(\Phi(i)\notin D_i\text{ for every }i\in I\bigr)
\ge
\prod_{i\in I}\left(1-\frac{|D_i|}{q-m}\right).
$$

\item\label{item:uniformity2}
For $a\in[m]$, $I\subseteq[m]$, and a family
$\mathcal D=(D_i)_{i\in I}$ of subsets of $[q]$, let
$Z(a,I,\mathcal D)$ be the number of vertices $u\in V_a$ such that
$\phi_u(i)\notin D_i$ for every $i\in I$. Then, with high probability,
simultaneously for all such choices,
$$
\frac{Z(a,I,\mathcal D)}{T}
\ge
\Pr\bigl(\Phi(i)\notin D_i\text{ for every }i\in I\bigr)
-\frac{1}{m\log m}.
$$
\end{enumerate}
\end{lemma}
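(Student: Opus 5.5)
For \ref{item:avoidance}, I will reveal the values of $\Phi$ one index at a time. Order $I=\{i_1,\dots,i_k\}$ and apply the chain rule:
$$
\Pr\bigl(\Phi(i)\notin D_i\ \forall i\in I\bigr)=\prod_{t=1}^k \Pr\bigl(\Phi(i_t)\notin D_{i_t}\mid \Phi(i_s)\notin D_{i_s}\ \forall s<t\bigr).
$$
Conditioning on the values $\Phi(i_1),\dots,\Phi(i_{t-1})$ (any values consistent with the event), $\Phi(i_t)$ is uniform on the remaining $q-t+1\ge q-m$ colours. At most $|D_{i_t}|$ of these colours are forbidden. So each conditional probability is at least $1-|D_{i_t}|/(q-t+1)\ge 1-|D_{i_t}|/(q-m)$; this uses $|D_i|\le q-m$ so that the bound is meaningful. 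Averaging over the conditioning values preserves the bound, and taking the product gives the claim. (If some factor were negative the inequality would be trivial, but the hypothesis rules that out.)

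For \ref{item:uniformity2}, fix $a,I,\mathcal D$. Then $Z(a,I,\mathcal D)$ is a sum of $T$ independent indicators, one per $u\in V_a$. Each indicator has mean $p=\Pr(\Phi(i)\notin D_i\ \forall i\in I)$, since the labels are i.i.d. copies of $\Phi$. By Hoeffding's inequality,
$$
\Pr\bigl(Z\le (p-\tfrac{1}{m\log m})T\bigr)\le \exp\bigl(-2T/(m\log m)^2\bigr).
$$

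The main obstacle is the union bound, because the number of choices of $\mathcal D$ is huge: up to $2^{qm}$ families, times $2^m$ choices of $I$ and $m$ choices of $a$. Fortunately $T\ge qm^3(\log m)^2$, so
$$
2T/(m\log m)^2\ge 2qm \gg qm+m+\log m .
$$
The total failure probability is therefore at most $m2^m2^{qm}e^{-2qm}=o(1)$. No reduction is needed: we union bound directly over all families $(D_i)_{i\in I}$ with $D_i\subseteq[q]$ arbitrary. This is presumably exactly why $T$ was chosen of order $qm^3(\log m)^2$.
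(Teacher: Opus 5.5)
Your proof is correct and follows essentially the same route as the paper: for (i), you expose the values $\Phi(i)$ one at a time, with at least $q-m$ colours still available at each step; for (ii), you apply Hoeffding and take a direct union bound over all triples $(a,I,\mathcal D)$, using $T\ge qm^3(\log m)^2$. The only cosmetic difference is that you count the choices as $m2^m2^{qm}$ instead of the paper's $m(1+2^q)^m<e^{qm}$, and this works equally well against the $e^{-2qm}$ tail.
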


\begin{proof}
For \ref{item:avoidance}, expose the values $\Phi(i)$, $i\in I$,
sequentially. At each step, at least $q-m$ colours remain available, so
the conditional probability of avoiding $D_i$ is at least
$1-|D_i|/(q-m)$. Multiplying these bounds over $i\in I$ proves
\ref{item:avoidance}.

For \ref{item:uniformity2}, since the labels $\phi_u$, $u\in V_a$,
are independent and identically distributed, $Z(a,I,\mathcal D)$ is
the sum of $T$ independent Bernoulli random variables with common mean
$\Pr\bigl(\Phi(i)\notin D_i\text{ for every }i\in I\bigr)$. Hence
Hoeffding's inequality gives
$$
\Pr\left(
\frac{Z(a,I,\mathcal D)}{T}
<
\Pr\bigl(\Phi(i)\notin D_i\text{ for every }i\in I\bigr)
-\frac{1}{m\log m}
\right)
\le
\exp\left(-\frac{2T}{m^2(\log m)^2}\right).
$$
There are at most $m(1+2^q)^m<e^{qm}$ choices of
$(a,I,\mathcal D)$, while $T/(m^2(\log m)^2)\ge qm$. A union bound
therefore proves \ref{item:uniformity2}.
\end{proof}

\subsection{Matchings in the construction and the proof of Theorem~\ref{thm:main}}
Fix a realization of the labels for which
Lemma~\ref{lem:construction-properties}
and Lemma~\ref{lem:label-properties} both hold.
We now analyse matchings in the construction. For a matching $M$, let
$A(M)=|M\cap E(J)|$, and for each $i\in[m]$ let
$
C_i(M)=\{c\in[q]:M\cap E(F_{i,c})\ne\varnothing\}.
$
Thus $A(M)$ counts the auxiliary-coloured edges of $M$, while $C_i(M)$ records the base colours used by $M$ in the graphs
$F_{i,c}$, and $\sum_{i=1}^m|C_i(M)|$ is the number of pairs
$(i,c)$ for which $M$ uses an edge of $F_{i,c}$.

\begin{lemma}\label{lem:path-matching}
Suppose that Construction~\ref{con:colouring} is partitioned into $\ell$
monochromatic paths. Then there is a matching $M$ such that
$
A(M)+\sum_{i=1}^m|C_i(M)|\le\ell\text{ and } 
n-2e(M)\le\ell.
$
\end{lemma}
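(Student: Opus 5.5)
Let $\mathcal P=\{P_1,\dots,P_\ell\}$ be a partition of $V$ into monochromatic paths. In each $P_j$, pick a maximum matching $M_j$ made of alternate edges along the path, and set $M=\bigcup_j M_j$. The paths are vertex-disjoint, so $M$ is a matching. A path on $k$ vertices has a matching of size $\lfloor k/2\rfloor$, which leaves at most one vertex uncovered. Summing over the $\ell$ paths gives $n-2e(M)\le\ell$.

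For the first inequality, I charge each contribution to $A(M)+\sum_i|C_i(M)|$ to a distinct path of $\mathcal P$. Paths on a single vertex have $M_j=\varnothing$ and contribute nothing. Every other path is monochromatic, and its colour is either auxiliary or a base colour $c\in[q]$.

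If $P_j$ has an auxiliary colour, then by Lemma~\ref{lem:construction-properties}\ref{item:auxiliary-matching} that colour class is a matching. A path in a matching has at most one edge, so $|M_j\cap E(J)|\le1$. Since every edge of $J$ receives an auxiliary colour, base-coloured paths contain no $J$-edges. Hence $A(M)$ is at most the number of auxiliary-coloured nontrivial paths.

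If $P_j$ has base colour $c$, it is a connected colour-$c$ subgraph on at least two vertices. By Lemma~\ref{lem:construction-properties}\ref{item:F-disjoint} it lies in a unique $F_{i,c}$, so all edges of $M_j$ lie in $E(F_{i,c})$ for this single pair $(i,c)$. A pair $(i,c)$ belongs to $C_i(M)$ only if some path has been assigned to it. Therefore $\sum_i|C_i(M)|$ is at most the number of base-coloured nontrivial paths, because distinct paths may share a pair, which only helps.

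Adding the two bounds gives $A(M)+\sum_i|C_i(M)|\le\ell$. There is no real obstacle here. The only point needing care is that the base-colour edges are exactly $\bigcup E(F_{i,c})$ and are disjoint from $E(J)$, which holds by Construction~\ref{con:colouring}.
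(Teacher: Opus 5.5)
Your proposal is correct and follows essentially the same approach as the paper. You take a maximum matching in each path. An auxiliary-coloured path has at most one edge, since each auxiliary colour class is a matching, so it contributes at most one to $A(M)$. A base-coloured path lies in a unique $F_{i,c}$ and so contributes at most one to $\sum_i|C_i(M)|$. Finally, a maximum matching in a path leaves at most one vertex unmatched.
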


\begin{proof}
Choose a maximum matching in each path, and let $M$ be their union.
By Lemma~\ref{lem:construction-properties}\ref{item:auxiliary-matching},
a nontrivial path of an auxiliary colour consists of a single edge and hence
contributes one to $A(M)$. By Lemma~\ref{lem:construction-properties}\ref{item:F-disjoint}, any
path of base colour $c$ and length at least $1$ lies in $F_{i,c}$ for a
unique $i\in[m]$ and contributes at most one to
$\sum_{i=1}^m|C_i(M)|$. This proves the first inequality. The second
follows because a maximum matching in a path leaves at most one vertex
unmatched.
\end{proof}

Our goal is to show that every monochromatic path partition of the
constructed graph contains at least $(1-o(1))q\log\log m$ paths. Thus, given a
partition into $\ell$ monochromatic paths, we may assume that
$\ell<q\log\log m$. Let $M$ be the matching supplied by
Lemma~\ref{lem:path-matching}, and set
$
A=A(M), D=n-2e(M),
X=\frac1q\sum_{i=1}^m|C_i(M)|.
$
Then
$$
A+qX\le\ell<q\log\log m
\qquad\text{and}\qquad
D\le\ell<q\log\log m.
$$
In particular,
$$
X<\log\log m
\qquad\text{and}\qquad
A+D<2q\log\log m.
$$
The latter means that $M$ covers almost all vertices and uses only a few
auxiliary-coloured edges, while the former means that its base-coloured
edges are supported on relatively few pairs $(i,c)$. The next lemma shows that $X$ cannot be too small. Intuitively, if $X$ were too small, then the total size of the sets
$C_i(M)$ would be small. Since the labels are random, many vertices
would avoid $C_i(M)$ for most earlier indices $i$ and hence have very
few ways to be matched. As $M$ uses only a few auxiliary edges and leaves only a
few vertices unmatched, this would be impossible.
\begin{lemma}\label{lem:matching-support}
Let $M$ be a matching in Construction~\ref{con:colouring}, and set
$A=A(M)$, $D=n-2e(M)$, and
$X=\frac1q\sum_{i=1}^m|C_i(M)|$.
If $X<\log\log m$ and $A+D<2q\log\log m$, then
$X\ge(1-o(1))\log\log m$, where the $o(1)$ tends to zero as $q\to\infty$ and is independent of the choice of $M$.
\end{lemma}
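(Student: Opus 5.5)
Throughout, fix a realization satisfying Lemma~\ref{lem:construction-properties} and Lemma~\ref{lem:label-properties}. Write $x_i=|C_i(M)|/q$, so that $\sum_i x_i=X$. The argument is a counting argument on suffixes of classes. For $1\le k\le m$ put $t=m-k+1$, $S_k=V_t\cup\cdots\cup V_m$ and $Y_k=\sum_{i<t}x_i\le X$.

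\textbf{Step 1: classify how a vertex of $S_k$ can be covered.} Every vertex $v\in S_k$ falls into one of four cases.
\begin{enumerate}[(a)]
\item $v$ is unmatched.
\item $v$ is covered by an edge of $M\cap E(J)$.
\item $v$ is matched by a base-coloured edge to some $u\in V_i$ with $i<t$. Since $u\prec v$, this edge lies in $F_{i,c}$ with $c=\phi_v(i)\in C_i(M)$.
\item $v$ is matched by a base-coloured edge $xy$ inside $S_k$ with $x\prec y$. This edge lies in some $F_{i,c}$ with $i=\operatorname{part}(x)\ge t$, and $\phi_y(i)\in C_i(M)$.
\end{enumerate}
In case (d) the later endpoint $y$ belongs to
\[
W_k=\{y\in S_k:\ \exists\, i\in[t,\operatorname{part}(y)]\text{ with }\phi_y(i)\in C_i(M)\},
\]
and each $y\in W_k$ accounts for at most two vertices in case (d). Hence
\[
kT=|S_k|\le \#\{v\in S_k:\exists i<t,\ \phi_v(i)\in C_i(M)\}+2|W_k|+2A+D .
\]

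\textbf{Step 2: estimate the two counts.} For the first count, apply Lemma~\ref{lem:label-properties}\ref{item:uniformity2} with $D_i=C_i(M)$ for $i<t$, classwise. Then Lemma~\ref{lem:label-properties}\ref{item:avoidance} gives that at least
\[
T\Bigl(\prod_{i<t}\bigl(1-\tfrac{q}{q-m}x_i\bigr)-\tfrac1{m\log m}\Bigr)\ \ge\ T\Bigl(e^{-(1+o(1))Y_k}-\tfrac1{m\log m}\Bigr)
\]
vertices of each class avoid all $C_i(M)$ with $i<t$. For the products I may assume each $x_i$ is small; classes with $x_i$ bounded below already contribute a lot to $X$ and can be treated separately. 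For the second count, Lemma~\ref{lem:construction-properties}\ref{item:F-bipartite} gives $|\{y\in V_a:\phi_y(i)=c\}|=(1+o(1))T/q$. Summing over $c\in C_i(M)$ and over $a\in[i,m]$ yields
\[
|W_k|\le(1+o(1))T\sum_{i\ge t}x_i(m-i+1).
\]
Since $2A+D<4q\log\log m=o(T/(m\log m))$, dividing Step 1 by $T$ gives, for every $k$,
\[
k\,e^{-(1+o(1))X}\le 2(1+o(1))\sum_{j=1}^{k} j\,x_{m-j+1}+\frac{2k}{m\log m}.
\]

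\textbf{Step 3: average over $k$ (the main step).} Suppose for contradiction that $X\le(1-\eps)\log\log m$. Then $e^{-(1+o(1))X}\ge(\log m)^{-(1-\eps/2)}$. Multiply the inequality of Step 2 by $1/k^2$ and sum over $k=1,\dots,m$.
\begin{enumerate}[(1)]
\item The left side becomes $\gtrsim(\log m)^{\eps/2}$, because $\sum_{k\le m}1/k\approx\log m$.
\item On the right, $\sum_{k\ge j}1/k^2\le 2/j$, so the main term is at most $4\sum_j x_{m-j+1}\le 4X=O(\log\log m)$.
\item The error term contributes $O(\log m/(m\log m))=o(1)$.
\end{enumerate}
This gives $(\log m)^{\eps/2}=O(\log\log m)$, a contradiction for large $q$. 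Since $\eps>0$ was arbitrary, $X\ge(1-o(1))\log\log m$, and no step depends on $M$.

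The delicate point is this averaging step. One must check that the weight $1/k^2$ converts the harmonic sum into a factor of $\log m$ on the left while keeping the right side $O(X)$. One must also check that the error terms $2A+D$ and $k/(m\log m)$ stay negligible even for $k=1$; this uses $T\gg q\, m\log m\log\log m$, which holds since $T\ge qm^3(\log m)^2$. If a hypothesis is missing, it is most likely the bound on $x_i$ needed for the product estimate.
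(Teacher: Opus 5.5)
Your Steps 1--3 are correct as far as they go, and they are essentially the paper's argument. Your avoiding vertices in $S_k$ play the role of the paper's $U$, and your bound on $|W_k|$ is its Claim~2. Your $1/k^2$-weighted sum over all $k$ extracts the factor $\log m$ by the same mechanism as the paper's choice of a good dyadic $s\in[(\log m)^3,m/2]$ via averaging with weight $1/s$.

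However, the step you defer is a genuine gap, not a routine detail. The bound $\prod_{i<t}\bigl(1-\tfrac{q}{q-m}x_i\bigr)\ge e^{-(1+o(1))Y_k}$ fails in general, and Lemma~\ref{lem:label-properties}\ref{item:avoidance} does not even apply once $|C_i(M)|>q-m$. For instance, a matching with $C_1(M)=[q]$ pays only $1$ towards $X$, and up to $\log\log m$ such classes are allowed. Yet no vertex avoids $C_1(M)$, so $\rho_k=0$ for every $k\le m-1$ and your Step~2 inequality says nothing. Treating such classes ``separately'' requires changing Step~1, as the paper does:
\begin{itemize}
\item fix a threshold $\delta$ and let $L=\{i:x_i>\delta\}$;
\item count only vertices $v\in S_k$ with $\phi_v(i)\notin C_i(M)$ for all $i<t$ with $i\notin L$;
\item add a fifth case: such a vertex matched by a base-coloured edge into $\bigcup_{i\in L}V_i$.
\end{itemize}
There are at most $|L|T$ vertices in the new case. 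So after dividing by $T$, your inequality gains an extra term $|L|$ on the right, where $|L|\le X/\delta$.

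This extra term interacts with your averaging, so the parameters matter. Summing with weight $1/k^2$ over all $k\ge1$ turns it into as much as $\frac{\pi^2}{6}|L|$, which must be $o((\log m)^{\eps/2})$. With the paper's threshold $\delta=1/\log m$ you only know $|L|\le X\log m$. That can be of order $\log m\log\log m$, which swamps your left-hand side. This is exactly why the paper only uses $s\ge(\log m)^3$, making $|L|/s=O(X/(\log m)^2)$. You can repair your version in either of two ways.
\begin{itemize}
\item Restrict the sum to $k\ge(\log m)^3$. The left side is still $\gtrsim e^{-(1+o(1))X}\log m$, the main term is still at most $4(1+o(1))X$, and the $L$-term becomes $O(|L|/(\log m)^3)=o(1)$.
\item Keep all $k$ but take a coarser threshold such as $\delta=1/\log\log m$. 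Then $|L|\le(\log\log m)^2$ and $|C_i(M)|\le q-m$ for $i\notin L$. Writing $y_i=\frac{q}{q-m}x_i$, the inequality $\log(1-y)\ge-y-y^2$ together with $\sum_{i\notin L}y_i^2\le\max_i y_i\sum_i y_i=O(1)$ gives $\rho_k\ge e^{-(1+o(1))X-O(1)}$. That still yields the contradiction.
\end{itemize}

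A minor point: Lemma~\ref{lem:construction-properties}\ref{item:F-bipartite} only controls $|\{y\in V_a:\phi_y(i)=c\}|$ for $a>i$. For $a=i$, use Lemma~\ref{lem:label-properties}\ref{item:uniformity2} with $I=\{i\}$, as in the paper's Claim~2; the additional error is negligible after your weighting.
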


This lemma follows from the following technical result, which shows
that there is a choice of $s$, corresponding
to the final $s$ classes $V_{m-s+1},\ldots,V_m$, such that, after
excluding the set $L$ of preceding classes on which $M$ uses more than
$q/\log m$ base colours, the probability that $\Phi(i)\notin C_i(M)$
for every remaining preceding class $i$ is small. This event is relevant because
Lemma~\ref{lem:label-properties}\ref{item:uniformity2} translates its
probability into a lower bound on the proportion of vertices whose
labels satisfy the corresponding avoidance conditions. Note that if a vertex $v$ satisfies
$\phi_v(i)\notin C_i(M)$, then $v$ cannot be the later endpoint of a
base-coloured edge of $M$ whose earlier endpoint lies in $V_i$, since
every such edge with later endpoint $v$ belongs to
$F_{i,\phi_v(i)}$. The upper bound on this avoidance probability given
by the next lemma will then be compared with the lower bound from
Lemma~\ref{lem:label-properties}\ref{item:avoidance}, forcing $X$ to be
large.

\begin{lemma}\label{lem:tail-covering}
Let $M$ be a matching in Construction~\ref{con:colouring}, and set
$A=A(M)$, $D=n-2e(M)$, and
$X=\frac1q\sum_{i=1}^m|C_i(M)|$.
Suppose that $A+D\le2q\log\log m$. Then there exists an integer $s$ with
$(\log m)^3\le s\le m/2$ such that, for
$L=\{i\in[m-s]:|C_i(M)|>q/\log m\}$,
$$
\Pr\bigl(\Phi(i)\notin C_i(M)\text{ for every }i\in[m-s]\setminus L\bigr)
=O\left(\frac{X+1}{\log m}\right).
$$
\end{lemma}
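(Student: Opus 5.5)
The plan is a counting argument on the last $s$ classes $B_s=V_{m-s+1}\cup\cdots\cup V_m$, comparing $|B_s|=sT$ with the number of ways $M$ can cover these vertices. The choice of $s$ is made by pigeonhole at the end. Set
$$
p_s=\Pr\bigl(\Phi(i)\notin C_i(M)\text{ for every }i\in[m-s]\setminus L\bigr),
$$
and call $v\in B_s$ an \emph{avoider} if $\phi_v(i)\notin C_i(M)$ for all $i\in[m-s]\setminus L$. Applying Lemma~\ref{lem:label-properties}\ref{item:uniformity2} to each $a>m-s$, with $I=[m-s]\setminus L$ and $D_i=C_i(M)$, at most $(1-p_s+1/(m\log m))sT$ vertices of $B_s$ are non-avoiders.

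\textbf{Step 1: classify the vertices of $B_s$.} Every $v\in B_s$ falls into one of the following types.
\begin{enumerate}[label=(\alph*)]
\item $v$ is unmatched; there are at most $D$ such vertices.
\item $v$ lies on an auxiliary edge of $M$; there are at most $2A$ such vertices.
\item $v$ is the later endpoint of a base edge whose earlier endpoint lies in $V_i$ with $i\in L$. Since $M$ is a matching, there are at most $|L|T$ such vertices. Each $i\in L$ has $|C_i(M)|>q/\log m$, so $|L|\le qX\log m/q=X\log m$, and this type contributes at most $XT\log m$.
\item $v$ is the later endpoint of a base edge from $V_i$ with $i\in[m-s]\setminus L$. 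Then $\phi_v(i)\in C_i(M)$, so $v$ is a non-avoider.
\item $v$ is an endpoint of an edge of $M$ lying inside $B_s$.
\end{enumerate}
Every edge inside $B_s$ has exactly one later endpoint, so the number of type-(e) vertices is at most twice the number of later endpoints of base edges internal to $B_s$, plus auxiliary edges already counted in (b).

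\textbf{Step 2: bound the internal later endpoints.} Such a later endpoint $y\in V_a$ has its partner in some $V_j$ with $m-s<j<a$, and $\phi_y(j)\in C_j(M)$. By Lemma~\ref{lem:construction-properties}\ref{item:F-bipartite}, each $F_{j,c}$ meets each later class in $(1+o(1))T/q$ vertices. Hence the number of these vertices is at most $(1+o(1))sT\,Y_s$, where $Y_s=\frac1q\sum_{j>m-s}|C_j(M)|$.

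\textbf{Step 3: combine.} Adding the five types gives
$$
sT\le D+2A+XT\log m+\Bigl(1-p_s+\tfrac{1}{m\log m}\Bigr)sT+(2+o(1))sT\,Y_s .
$$
Rearranging,
$$
p_s\le\frac{4q\log\log m}{sT}+\frac{X\log m}{s}+\frac1{m\log m}+3Y_s .
$$
For $s\ge(\log m)^3$ the first three terms are $O((X+1)/\log m)$, because $T\gg q$ and $X\log m/s\le X/(\log m)^2$.

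\textbf{Main obstacle: making $Y_s$ small.} It remains to choose $s\in[(\log m)^3,m/2]$ with $Y_s=O((X+1)/\log m)$, which is the step I expect to be hardest. The tail sums $Y_s$ are nested rather than disjoint, and $M$ could concentrate its colours on the very last classes. My first attempt is to refine the internal count in Step~2. I would split $B_s$ into a dyadic window $W=(m-2s',m-s']$ and treat edges from classes of $B_s$ beyond $W$ using the avoidance on $[m-s]$ for a larger $s$. I would also pigeonhole over the $\Theta(\log m)$ dyadic scales $2^k\in[(\log m)^3,m/2]$; the windows are disjoint and their masses $\frac1q\sum_{j\in W_k}|C_j(M)|$ sum to at most $X$. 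So some scale has window mass at most $X/\Theta(\log m)=O(X/\log m)$, and I would run Steps~1--3 with the counting restricted to that window. If this does not close directly, I would instead absorb the heavy final classes into the exceptional set, in the same way as $L$: there are at most $X\log m$ classes $j$ with $|C_j(M)|>q/\log m$, each contributing at most $T$ vertices. The light classes contribute at most a $1/\log m$ fraction per class, and pigeonholing is then used only to bound how many of them lie in the chosen window.
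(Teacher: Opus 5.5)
Your framework is the paper's: count the avoiders in the final classes and compare with the ways $M$ can cover them. Steps~1 and~3 are sound. The gap is in Step~2, which is the step you flag yourself.

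You bound the later endpoints of edges starting in $V_j$, $j>m-s$, by $sT|C_j(M)|/q$. This throws away the fact that only $m-j+1$ classes lie after $V_j$. What survives is the unweighted tail $Y_s=\frac1q\sum_{j>m-s}|C_j(M)|$, and it cannot be made small by choosing $s$. The hypothesis $A+D\le2q\log\log m$ says nothing about how the mass $qX$ is distributed. If $M$ spends even $q$ colours in the last $(\log m)^3$ classes, then $Y_s\ge1$ for every admissible $s$, while the target $O((X+1)/\log m)$ is $o(1)$ whenever $X=o(\log m)$. So Step~3 ends in an inequality you cannot close.

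Your second fallback does not help. The obstruction is the total mass of the final classes, not the number of heavy ones: light classes in the tail still contribute their full mass to $Y_s$.

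There is also a smaller omission. $M$ may contain base edges with both endpoints in one class, so Step~2 must allow $j=a$. Lemma~\ref{lem:construction-properties}\ref{item:F-bipartite} only covers $a>i$. Instead, count $\{y\in V_a:\phi_y(j)\in C_j(M)\}$ for all $a\ge j$ via Lemma~\ref{lem:label-properties}\ref{item:uniformity2} with $I=\{j\}$, as the paper does.

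The missing idea is to keep the weight $m-j+1$. The paper bounds the base edges of $M$ with earlier endpoint in the final $s$ classes by $\frac{T}{q}\sum_{i>m-s}(m-i+1)|C_i(M)|+\frac{Ts(s+1)}{2m\log m}$. After dividing by $sT$, the bad term is $\frac{1}{qs}\sum_{i>m-s}(m-i+1)|C_i(M)|$. Summing over powers of two $s\in[(\log m)^3,m/2]$ and using $\sum_{s\ge m-i+1}(m-i+1)/s\le2$ gives at most $2X$ in total, so some $s$ makes the term $O(X/\log m)$. Colours placed in the very last classes are cheap precisely because those classes have few successors.

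Your dyadic-window idea also closes the gap once made precise. Put $s=2s'$ and count only the avoiders in $W=\{m-2s'+1,\dots,m-s'\}$, not in all of the final $s$ classes. No edge whose earlier endpoint lies beyond $W$ touches $W$, so nothing needs to be ``treated with a larger $s$.'' Each avoider in $W$ is then one of the following:
\begin{enumerate}[label=(\roman*)]
\item unmatched;
\item on a $J$-edge;
\item matched from $\bigcup_{i\in L}V_i$;
\item on a base edge whose earlier endpoint lies in some $V_j$ with $j\in W$.
\end{enumerate}
Since $m-j+1\le2s'$ for $j\in W$, the number of avoiders of the last kind is at most
\[
2\sum_{j\in W}(m-j+1)T\bigl(|C_j(M)|/q+\eps\bigr)\le4s'T(\mu_W+s'\eps),
\]
where $\mu_W=\frac1q\sum_{j\in W}|C_j(M)|$ and $\eps=1/(m\log m)$. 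Dividing by $s'T$ and pigeonholing over the $\Theta(\log m)$ disjoint windows gives $\rho=O((X+1)/\log m)$. This is a slightly more transparent version of the paper's weighted averaging.
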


\begin{proof}
Let $s$ be any integer with $(\log m)^3\le s\le m/2$, and let $\rho$
denote the probability on the left-hand side of the desired inequality. Let $U$
be the set of vertices $u\in V_{m-s+1}\cup\cdots\cup V_m$ such that
$\phi_u(i)\notin C_i(M)$ for every $i\in[m-s]\setminus L$.

We bound $\rho$ through the size of $U$. On the one hand, by
Lemma~\ref{lem:label-properties}\ref{item:uniformity2}, the probability
$\rho$ is reflected in each class $V_a$, where $m-s+1\le a\le m$, giving
a lower bound on $|U|$. On the other hand, vertices
of $U$ are difficult to match using $M$. By the definition of $U$, we
will show that no vertex of $U$ can be matched by a base-coloured edge
to $\bigcup_{i\in[m-s]\setminus L}V_i$. Thus the vertices of $U$ are
accounted for by those matched into $\bigcup_{i\in L}V_i$, those covered
by edges of $M\cap E(J)$, those covered by edges of $M$ whose earlier
endpoints lie in $\bigcup_{i=m-s+1}^m V_i$, and those left unmatched.
Comparing the resulting upper bound on $|U|$ with the lower bound above
gives an upper bound on $\rho$.

\smallskip
\noindent\textbf{Claim 1.}
We have $|U|\ge sT(\rho-1/(m\log m))$, and at most $|L|T$
vertices of $U$ are matched by $M$ to $V_1\cup\cdots\cup V_{m-s}$
through base-coloured edges.

\begin{proof}
For each $a\in\{m-s+1,\ldots,m\}$, apply
Lemma~\ref{lem:label-properties}\ref{item:uniformity2} with
$I=[m-s]\setminus L$ and $D_i=C_i(M)$. We obtain
$|U\cap V_a|\ge T(\rho-1/(m\log m))$. Summing over these $s$ values of $a$
gives
$
|U|\ge sT(\rho-1/(m\log m)).
$

Suppose that a base-coloured edge of $M$ has earlier endpoint in $V_i$,
where $i\le m-s$, and later endpoint $u\in U$. Since its colour lies in $C_i(M)$, while the definition of $U$ gives
$\phi_u(i)\notin C_i(M)$ whenever $i\notin L$, we must have $i\in L$. Thus every base-coloured edge of $M$ matching a vertex of $U$ to
$V_1\cup\cdots\cup V_{m-s}$ has its other endpoint in
$\bigcup_{i\in L}V_i$, which has size $|L|T$.
\end{proof}

We then bound the number of base-coloured matching edges starting in
the final $s$ classes.

\smallskip
\noindent\textbf{Claim 2.}
The number of base-coloured edges of $M$ whose earlier
endpoints lie in $V_{m-s+1}\cup\cdots\cup V_m$ is at most
$$
\frac{T}{q}\sum_{i=m-s+1}^m(m-i+1)|C_i(M)|
+\frac{Ts(s+1)}{2m\log m}.
$$

\begin{proof}
Fix $i\in\{m-s+1,\ldots,m\}$. If such an edge has earlier endpoint in
$V_i$ and later endpoint $u\in V_a$, then $a\ge i$ and
$\phi_u(i)\in C_i(M)$. Applying Lemma~\ref{lem:label-properties}\ref{item:uniformity2} with
$I=\{i\}$ and $D_i=C_i(M)$, and using
$\Pr(\Phi(i)\notin C_i(M))=1-|C_i(M)|/q$, there are at most
$T(|C_i(M)|/q+1/(m\log m))$ possible later endpoints in each $V_a$. 
Thus, for fixed $i$, there are at most
$(m-i+1)T(|C_i(M)|/q+1/(m\log m))$ such edges. Summing over
$i=m-s+1,\ldots,m$ proves the claim.
\end{proof}

By Claim~1,
$|U|\ge sT(\rho-1/(m\log m))$. On the other hand, at most $|L|T$
vertices of $U$ are matched by base-coloured edges to the preceding
classes, at most $2A$ are covered by edges of $J$, at most twice the
bound in Claim~2 are covered by base-coloured edges whose earlier
endpoints lie in the final $s$ classes, and at most $D$ are unmatched.
Hence
$$
sT\left(\rho-\frac{1}{m\log m}\right)
\le
|L|T+2A
+\frac{2T}{q}\sum_{i=m-s+1}^m(m-i+1)|C_i(M)|
+\frac{Ts(s+1)}{m\log m}+D.
$$
Dividing by $sT$ gives
$$
\rho\le
\frac{|L|}{s}
+\frac{2}{qs}\sum_{i=m-s+1}^m(m-i+1)|C_i(M)|
+\frac{s+2}{m\log m}
+\frac{2A+D}{sT}.
$$
Since $s\le m/2$ and $A+D\le2q\log\log m$, the last two terms are
$O(1/\log m)$ and $o(1/\log m)$, respectively. We now choose $s$ so
that the first two terms are small.

\smallskip
\noindent\textbf{Claim 3.}
There exists an integer $s$ with $(\log m)^3\le s\le m/2$ such that
$$
\frac{1}{qs}\sum_{i=m-s+1}^m(m-i+1)|C_i(M)|
=O\left(\frac{X}{\log m}\right)
\quad\text{and}\quad
\frac{|L|}{s}=O\left(\frac{X}{(\log m)^2}\right).
$$

\begin{proof}
Let $\mathcal S$ be the set of powers of two in
$[(\log m)^3,m/2]$. Changing the order of summation gives
$$
\begin{aligned}
\sum_{s\in\mathcal S}\frac{1}{qs}
\sum_{i=m-s+1}^m(m-i+1)|C_i(M)|
&=
\frac{1}{q}\sum_{i=1}^m|C_i(M)|
\sum_{\substack{s\in\mathcal S\\ s\ge m-i+1}}
\frac{m-i+1}{s} \\
&\le \frac{2}{q}\sum_{i=1}^m|C_i(M)|
\\&=2X.
\end{aligned}
$$
Since $|\mathcal S|=\Theta(\log m)$, some $s\in\mathcal S$ satisfies
$$
\frac{1}{qs}\sum_{i=m-s+1}^m(m-i+1)|C_i(M)|
=O\left(\frac{X}{\log m}\right).
$$
For this $s$, the definition of $L$ gives
$q|L|/\log m<\sum_{i\in L}|C_i(M)|\le qX$, and hence
$|L|\le X\log m$. Since $s\ge(\log m)^3$, it follows that
$|L|/s=O(X/(\log m)^2)$.
\end{proof}
Choose $s$ as in Claim~3. Then the first two terms in the preceding
bound for $\rho$ are $O(X/\log m)$. Therefore
$\rho=O((X+1)/\log m)$, as required.
\end{proof}

\begin{proof}[Proof of Lemma~\ref{lem:matching-support}]
Choose $s$ and $L$ as in Lemma~\ref{lem:tail-covering}, and let $\rho$
denote the probability appearing in that lemma. Then
$\rho=O((X+1)/\log m)$.

Since $|C_i(M)|\le q/\log m\le q-m$ for every
$i\in[m-s]\setminus L$ and $m=o(q)$, we have
$|C_i(M)|/(q-m)=o(1)$ uniformly over $i\in[m-s]\setminus L$.
Thus, applying
Lemma~\ref{lem:label-properties}\ref{item:avoidance} with
$I=[m-s]\setminus L$ and $D_i=C_i(M)$, and using
$\log(1-x)\ge -x-2x^2$ for sufficiently small $x\ge0$, we obtain
$$
\begin{aligned}
\rho
&\ge
\prod_{i\in[m-s]\setminus L}
\left(1-\frac{|C_i(M)|}{q-m}\right) \\
&\ge
\exp\left(
-\sum_{i\in[m-s]\setminus L}\frac{|C_i(M)|}{q-m}
-2\sum_{i\in[m-s]\setminus L}
\left(\frac{|C_i(M)|}{q-m}\right)^2
\right) \\
&\ge
\exp\left(
-\frac{qX}{q-m}
-\frac{2q}{(q-m)\log m}
\sum_{i\in[m-s]\setminus L}\frac{|C_i(M)|}{q-m}
\right) \\
&\ge
\exp\left(
-\frac{qX}{q-m}
-\frac{2q^2X}{(q-m)^2\log m}
\right)
\\ &=e^{-X-o(1)},
\end{aligned}
$$
where the last equality uses $m=o(q)$ and $X<\log\log m$. Comparing this with the upper bound on $\rho$ gives
$X+\log(X+1)\ge\log\log m-O(1)$. Since $X<\log\log m$, we have
$\log(X+1)=o(\log\log m)$, and hence
$X\ge(1-o(1))\log\log m$.
\end{proof}
\begin{proof}[Proof of Theorem~\ref{thm:main}]
Let $r$ be any sufficiently large integer and take
$q=\lfloor r-r/(\log r)^2\rfloor=(1-o(1))r$. We consider
Construction~\ref{con:colouring} with this choice of $q$. By
Lemma~\ref{lem:construction-properties}\ref{item:J-degree}, the graph
$J$ can be properly edge-coloured using $O(q/(\log q)^3)\le r-q$
auxiliary colours. Hence the construction uses at most $r$ colours.
Consider a partition of Construction~\ref{con:colouring} into $\ell$
monochromatic paths. Let $M$ be the matching supplied by
Lemma~\ref{lem:path-matching}, and set $A=A(M)$, $D=n-2e(M)$, and
$X=\frac1q\sum_{i=1}^m|C_i(M)|$.

We claim that $\ell\ge(1-o(1))q\log\log m$. The claim is immediate if
$\ell\ge q\log\log m$, so suppose that $\ell<q\log\log m$. Then $X<\log\log m$ and $A+D<2q\log\log m$, so
Lemma~\ref{lem:matching-support} gives
$X\ge(1-o(1))\log\log m$.
Hence
$$
\ell\ge qX\ge(1-o(1))q\log\log m,
$$
as required.

Finally, $m=(1+o(1))q^{1/5}/\log q$, so
$\log\log m=(1-o(1))\log\log r$. Together with $q=(1-o(1))r$, this
yields $\ell\ge(1-o(1))r\log\log r$. By the definition of
$\ppath(r)$, we conclude that
$\ppath(r)\ge(1-o(1))r\log\log r$.
\end{proof}
\subsection{Blow-ups of the construction and the proof of Theorem~\ref{cor:arbitrarily-large}}
\label{sec:blowup}
We begin by defining the blow-up of Construction~\ref{con:colouring}.
\begin{construction}\label{con:blowup}
Fix a positive integer $h$ and start with Construction~\ref{con:colouring}.
Replace each vertex $v\in V$ by a cluster $B_v$ of size $h$. For
distinct $x,y\in V$, colour every edge between $B_x$ and $B_y$ with
the colour of $xy$ in the original construction, and, for each $v\in V$,
colour all edges inside the cluster $B_v$ with a single fresh auxiliary
colour, the same for every cluster.

For each $i\in[m]$, set
$\widetilde V_i=\bigcup_{v\in V_i}B_v$. Every vertex of $B_v$ inherits
the label $\phi_v$, and, for each $c\in[q]$, let
$\widetilde F_{i,c}$ be the blow-up of $F_{i,c}$ obtained by replacing
each vertex $v$ with $B_v$. Thus the base-coloured edges are precisely
the edges of the graphs $\widetilde F_{i,c}$.

Finally, if a base-coloured edge joins $B_x$ and $B_y$ with $x\prec y$,
we call its endpoint in $B_x$ the earlier endpoint and its endpoint in
$B_y$ the later endpoint.
\end{construction}

For a matching $M$ in the blow-up, let $A=A(M)$ be the number of
non-base-coloured edges of $M$, and let
$C_i(M)=\{c\in[q]:M\cap E(\widetilde F_{i,c})\ne\varnothing\}$.
Set $D=hn-2e(M)$ and
$X=\frac{1}{q}\sum_{i=1}^m|C_i(M)|$. Note that, unlike in the original construction, $A$ now counts all
non-base-coloured edges, including those lying inside the clusters.

\begin{lemma}\label{lem:blowup-matching}
For every positive integer $h$, if $X\le\log\log m$ and
$A+D\le2hq\log\log m$, then
$X\ge(1-o(1))\log\log m$. Here the $o(1)$ is with respect to $q\to\infty$, and its value does not
depend on $h$.
\end{lemma}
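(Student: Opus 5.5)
The plan is to rerun the proof of Lemma~\ref{lem:matching-support}, i.e.\ Lemma~\ref{lem:tail-covering} followed by the comparison with Lemma~\ref{lem:label-properties}\ref{item:avoidance}, in the blow-up. Every quantity counting vertices gets multiplied by $h$, and the hypothesis $A+D\le 2hq\log\log m$ is scaled by the same factor, so the final estimate does not depend on $h$. The labels are those of the original construction, so Lemma~\ref{lem:label-properties} holds for them without change. In the blow-up it yields the following: the number of vertices of $\widetilde V_a$ whose inherited label avoids $(D_i)_{i\in I}$ is $h\,Z(a,I,\mathcal D)\ge hT\bigl(\Pr(\Phi(i)\notin D_i\ \forall i\in I)-1/(m\log m)\bigr)$.

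First step: fix $s$ with $(\log m)^3\le s\le m/2$, and define $L=\{i\in[m-s]:|C_i(M)|>q/\log m\}$ and $\rho$ exactly as before. Let $U$ be the set of vertices in $\widetilde V_{m-s+1}\cup\dots\cup\widetilde V_m$ whose label avoids $C_i(M)$ for every $i\in[m-s]\setminus L$. The analogue of Claim~1 gives $|U|\ge shT(\rho-1/(m\log m))$. Moreover, a base-coloured edge of $M$ whose later endpoint $u\in U$ lies in $B_y$ and whose earlier endpoint lies in $\widetilde V_i$ with $i\le m-s$ lies in $\widetilde F_{i,\phi_y(i)}$, and this forces $i\in L$. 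Hence at most $|L|hT$ such vertices of $U$ occur.

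The analogue of Claim~2 bounds the base-coloured edges of $M$ whose earlier endpoints lie in the last $s$ blown-up classes by $h$ times the old bound. The count of admissible later endpoints in each $\widetilde V_a$ is $h$ times the old one, because those endpoints are whole clusters $B_y$ with $\phi_y(i)\in C_i(M)$.

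The remaining vertices of $U$ are covered by non-base-coloured edges of $M$ or are unmatched, and there are at most $2A+D$ of them. This now includes edges inside clusters, which is why $A$ was redefined to count all non-base-coloured edges. Dividing by $shT$ gives the same inequality for $\rho$ as before, with last term $(2A+D)/(shT)\le 4q\log\log m/(sT)=o(1/\log m)$. This is the only place where $h$ could have entered, and it cancels. Claim~3 involves only the sets $C_i(M)$ and $X$, so it transfers verbatim. We therefore get $\rho=O((X+1)/\log m)$ with an implicit constant independent of $h$.

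Finally, the lower bound $\rho\ge e^{-X-o(1)}$ from Lemma~\ref{lem:label-properties}\ref{item:avoidance} is purely a statement about $\Phi$ and the sets $C_i(M)$, so it is unchanged. Combining the two bounds, and using $X\le\log\log m$ (non-strict now, which is harmless), gives $X\ge(1-o(1))\log\log m$.

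The main point requiring care is checking that the blow-up does not create new base-coloured adjacencies. In particular, edges between $B_x$ and $B_y$ must lie in $\widetilde F_{i,c}$ exactly when $xy\in F_{i,c}$, so that the earlier/later-endpoint reasoning in Claims~1 and~2 goes through. This holds by construction, because edges inside clusters and blown-up $J$-edges are all non-base-coloured and are absorbed into $A$.
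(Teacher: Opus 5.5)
Your proposal is correct and follows essentially the same route as the paper's proof. The paper also reruns Claims~1--3 of Lemma~\ref{lem:tail-covering} with every vertex count multiplied by $h$, and uses the $h$-scaled hypothesis to show that $(2A+D)/(shT)=o(1/\log m)$ uniformly in $h$. It then reuses the lower bound $\rho\ge e^{-X-o(1)}$ from Lemma~\ref{lem:label-properties}\ref{item:avoidance}, including your observation that edges inside clusters are non-base-coloured and are absorbed into the redefined $A$.
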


\begin{proof}
Write $C_i=C_i(M)$. Let $s$ be any integer with
$(\log m)^3\le s\le m/2$, set
$L=\{i\in[m-s]:|C_i|>q/\log m\}$, and let
$\rho=\Pr(\Phi(i)\notin C_i\text{ for every }i\in[m-s]\setminus L)$.
Let $U$ be the set of vertices
$u\in\widetilde V_{m-s+1}\cup\cdots\cup\widetilde V_m$ such that
$\phi_u(i)\notin C_i$ for every $i\in[m-s]\setminus L$.

As in the proof of Lemma~\ref{lem:tail-covering}, we bound $\rho$
through the size of $U$. The only difference is that every original
vertex is replaced by $h$ vertices with the same label.

\smallskip
\noindent\textbf{Claim 1.}
We have
$
|U|\ge shT\left(\rho-{1}/({m\log m})\right),
$
and at most $|L|hT$ vertices of $U$ are matched by
base-coloured edges of $M$ to
$\widetilde V_1\cup\cdots\cup\widetilde V_{m-s}$.

\begin{proof}
This is the same argument as in Claim~1 of
Lemma~\ref{lem:tail-covering}. Since every vertex of $V_a$ is replaced
by $h$ vertices with the same label,
Lemma~\ref{lem:label-properties}\ref{item:uniformity2} gives
$|U\cap\widetilde V_a|\ge hT(\rho-1/(m\log m))$ for each
$a\in\{m-s+1,\ldots,m\}$. Summing over these $s$ classes gives the
first assertion.

For the second, if a base-coloured edge has earlier endpoint in
$\widetilde V_i$ with $i\le m-s$ and later endpoint $u\in U$, then its
colour lies in $C_i$. Hence $i\in L$ by the definition of $U$.
The claim follows since $M$ is a matching.
\end{proof}

We next bound the number of base-coloured matching edges starting in
the final $s$ classes.

\smallskip
\noindent\textbf{Claim 2.}
The number of base-coloured edges of $M$ whose earlier endpoints lie in
$\widetilde V_{m-s+1}\cup\cdots\cup\widetilde V_m$ is at most
$$
\frac{hT}{q}\sum_{i=m-s+1}^m(m-i+1)|C_i|
+\frac{hTs(s+1)}{2m\log m}.
$$

\begin{proof}
This is the blow-up analogue of Claim~2 of
Lemma~\ref{lem:tail-covering}. For fixed
$i\in\{m-s+1,\ldots,m\}$ and $a\ge i$, every possible later endpoint
$u\in\widetilde V_a$ satisfies $\phi_u(i)\in C_i$. By the
one-coordinate case of
Lemma~\ref{lem:label-properties}\ref{item:uniformity2}, there are at
most $hT(|C_i|/q+1/(m\log m))$ such vertices. Summing over $a\ge i$
and then over $i$ proves the claim.
\end{proof}

By Claim~1, $|U|\ge shT(\rho-1/(m\log m))$. On the other hand, at most
$|L|hT$ vertices of $U$ are matched by base-coloured edges to the
preceding classes, at most $2A$ are covered by non-base-coloured edges,
at most twice the bound in Claim~2 are covered by base-coloured edges
whose earlier endpoints lie in the final $s$ classes, and at most $D$
are unmatched. Hence
$$
shT\left(\rho-\frac{1}{m\log m}\right)
\le
|L|hT+2A
+\frac{2hT}{q}\sum_{i=m-s+1}^m(m-i+1)|C_i|
+\frac{hTs(s+1)}{m\log m}+D.
$$
Dividing by $shT$ gives
$$
\rho\le
\frac{|L|}{s}
+\frac{2}{qs}\sum_{i=m-s+1}^m(m-i+1)|C_i|
+\frac{s+2}{m\log m}
+\frac{2A+D}{shT}.
$$
Since $s\le m/2$ and $A+D\le2hq\log\log m$, the last two terms are
$O(1/\log m)$ and $o(1/\log m)$, respectively. We now choose $s$ so
that the first two terms are small.

\smallskip
\noindent\textbf{Claim 3.}
There exists an integer $s$ with $(\log m)^3\le s\le m/2$ such that
$$
\frac{1}{qs}\sum_{i=m-s+1}^m(m-i+1)|C_i|
=O\left(\frac{X}{\log m}\right)
\quad\text{and}\quad
\frac{|L|}{s}
=O\left(\frac{X}{(\log m)^2}\right).
$$

\begin{proof}
This is exactly the same averaging argument as in Claim~3 of
Lemma~\ref{lem:tail-covering}.
\end{proof}

Choose $s$ as in Claim~3. Then the first two terms in the preceding
bound for $\rho$ are $O(X/\log m)$, and therefore
$
\rho=O\left({(X+1)}/{\log m}\right).
$

It remains to give the lower bound on $\rho$. This is the same argument
as in the proof of Lemma~\ref{lem:matching-support}. For every
$i\in[m-s]\setminus L$, we have $|C_i|\le q/\log m\le q-m$, so
Lemma~\ref{lem:label-properties}\ref{item:avoidance} gives
$$
\rho\ge
\prod_{i\in[m-s]\setminus L}
\left(1-\frac{|C_i|}{q-m}\right).
$$
As before,
$\sum_{i\in[m-s]\setminus L}|C_i|/(q-m)\le X+o(1)$ and
$\sum_{i\in[m-s]\setminus L}(|C_i|/(q-m))^2
=O(X/\log m)=o(1)$.
Hence $\rho\ge e^{-X-o(1)}$.

Comparing the two bounds for $\rho$ gives
$X+\log(X+1)\ge\log\log m-O(1)$. Since $X\le\log\log m$, we have
$\log(X+1)=o(\log\log m)$, and hence
$X\ge(1-o(1))\log\log m$.
\end{proof}

\begin{proof}[Proof of Theorem~\ref{cor:arbitrarily-large}]
Let $r$ be sufficiently large and set
$q=\lfloor r-r/(\log r)^2\rfloor$, as in the proof of
Theorem~\ref{thm:main}. Fix a realization of
Construction~\ref{con:colouring} satisfying
Lemmas~\ref{lem:construction-properties} and~\ref{lem:label-properties},
and consider Construction~\ref{con:blowup} for a positive integer $h$.

Suppose that the resulting coloured complete graph is partitioned into
$\ell$ monochromatic paths. Choose a maximum matching in each path and
let $M$ be the union of these matchings. A maximum matching in a path
leaves at most one vertex unmatched, so $D=hn-2e(M)\le\ell$.

Pairwise vertex-disjointness is preserved under blow-up. Hence, by
Lemma~\ref{lem:construction-properties}\ref{item:F-disjoint}, every
base-coloured path containing an edge lies in
$\widetilde F_{i,c}$ for a unique pair $(i,c)$. Consequently,
$qX=\sum_{i=1}^m|C_i(M)|\le\ell$.

For an original auxiliary colour, each connected component in the
blow-up is a copy of $K_{h,h}$, because every auxiliary colour class in
$J$ is a matching. The additional colour used inside the clusters has
components that are copies of $K_h$. Thus a maximum matching in any
non-base-coloured path has at most $h$ edges, and hence
$A(M)\le h\ell$.

We claim that $\ell\ge(1-o(1))q\log\log m$. The claim is immediate if
$\ell\ge q\log\log m$, so suppose that $\ell<q\log\log m$. Then $X<\log\log m$ and
$A+D\le h\ell+\ell\le2h\ell<2hq\log\log m$.
Hence Lemma~\ref{lem:blowup-matching} gives
$X\ge(1-o(1))\log\log m$.
Hence
$$
\ell\ge qX\ge(1-o(1))q\log\log m,
$$
as required.

The blow-up uses the $q$ base colours, at most $\Delta(J)+1$ original
auxiliary colours, and one additional colour inside the clusters. By
Lemma~\ref{lem:construction-properties}\ref{item:J-degree}, for all
sufficiently large $r$ their total number is at most
$q+\Delta(J)+2\le r$.

Finally, choose $h$ sufficiently large that $N=hn>n_0$. The resulting
$r$-edge-coloured $K_N$ has arbitrarily large order and requires at
least $(1-o(1))r\log\log r$ monochromatic paths, as required.
\end{proof}

\subsection{Monochromatic path coverings with linear deficiency}
\label{sec:robust-covering}

We now prove Theorem~\ref{thm:path-uncovered}. The argument uses the
same blow-up construction as in the proof of
Theorem~\ref{cor:arbitrarily-large}. 
\begin{proof}[Proof of Theorem~\ref{thm:path-uncovered}]
Let $r$ be sufficiently large and set
$q=\lfloor r-r/(\log r)^2\rfloor$. Fix a realization of
Construction~\ref{con:colouring} satisfying
Lemmas~\ref{lem:construction-properties} and~\ref{lem:label-properties}.
For a positive integer $h$, consider Construction~\ref{con:blowup} and
set $N=hn$. As in the proof of Theorem~\ref{cor:arbitrarily-large},
the resulting colouring of $K_N$ uses at most $r$ colours.

Let $\mathcal P$ be a collection of $\ell\le r$ vertex-disjoint
monochromatic paths, and suppose that $\mathcal P$ leaves $u$ vertices
uncovered. Choose a maximum matching in each path and let $M$ be the
union of these matchings. With $A$, $D$, and $X$ as in the preceding
subsection, the same argument as in the proof of
Theorem~\ref{cor:arbitrarily-large} gives $qX\le\ell$ and
$A\le h\ell$. Since a maximum matching in a path leaves at most one
vertex unmatched, we also have $D\le u+\ell$.

We claim that $\mathcal P$ leaves at least $h$ vertices uncovered.
Suppose that $u<h$. Then $X\le r/q=1+o(1)<\log\log m$, while
$
A+D\le h\ell+u+\ell<hr+h+r.
$
Since $q=(1-o(1))r$ and $\log\log m\to\infty$, for all sufficiently
large $r$ we have $hr+h+r\le2hq\log\log m$. Hence
Lemma~\ref{lem:blowup-matching} gives
$X\ge(1-o(1))\log\log m$, contradicting $X\le r/q=1+o(1)$.
Therefore every such collection $\mathcal P$ leaves at least $h$
vertices uncovered.

Since $h$ can be chosen arbitrarily, $N=hn$, and $n$ depends only on
$r$, taking $\eta_r=1/n$ proves the theorem.
\end{proof}

\subsection{Partitions into monochromatic regular graphs}
\label{sec:regular}

We now prove Proposition~\ref{prop:regular}. The argument is a direct
consequence of the matching analysis used for Theorem~\ref{thm:main}.

\begin{proof}[Proof of Proposition~\ref{prop:regular}]
Let $r$ be sufficiently large and set
$q=\lfloor r-r/(\log r)^2\rfloor$. Fix a realization of
Construction~\ref{con:colouring} satisfying
Lemmas~\ref{lem:construction-properties} and~\ref{lem:label-properties}.
Fix $k\ge2$, and suppose that the vertex set is partitioned into
$\ell$ connected monochromatic $k$-regular graphs and single vertices.
Every auxiliary colour class is a matching by
Lemma~\ref{lem:construction-properties}\ref{item:auxiliary-matching},
so every nontrivial $k$-regular part has a base colour. Moreover, by
Lemma~\ref{lem:construction-properties}\ref{item:F-disjoint}, such a
part lies in $F_{i,c}$ for a unique pair $(i,c)$. Since $F_{i,c}$ is
bipartite, every $k$-regular part has a perfect matching.

Choose a perfect matching in each $k$-regular part and let $M$ be their
union. Set $A=|M\cap E(J)|$, $D=n-2e(M)$, and
$X=\frac1q\sum_{i=1}^m|C_i(M)|$. Then $A=0$, while $D$ is the number
of singleton parts. Furthermore, each $k$-regular part contributes to
at most one pair $(i,c)$. Hence $D\le\ell$ and $qX\le\ell$.

We claim that $\ell\ge(1-o(1))q\log\log m$. The claim is immediate if
$\ell\ge q\log\log m$, so suppose that $\ell<q\log\log m$. Then $X<\log\log m$ and
$A+D=D\le\ell<2q\log\log m$, so
Lemma~\ref{lem:matching-support} gives
$X\ge(1-o(1))\log\log m$.
Hence
$$
\ell\ge qX\ge(1-o(1))q\log\log m,
$$
as required.

Finally, since $q=(1-o(1))r$ and
$\log\log m=(1-o(1))\log\log r$, we obtain
$
p_k(r)\ge(1-o(1))r\log\log r.
$
\end{proof}

\section{Balanced complete bipartite graphs}\label{sec:bipartite}
The construction is simpler in this bipartite setting. The two sides of the bipartition play the roles of the
earlier and later endpoints from the complete-graph construction, so
there is no need for an exceptional graph $J$. 

Let $q$ be sufficiently large and set
$m=\lfloor q^{1/5}/\log q\rfloor$ and
$T=\lceil q m^3(\log m)^2\rceil$. Take pairwise disjoint sets
$V_1,\ldots,V_m,W_1,\ldots,W_m$, each of size $T$, and put
$V=V_1\mathbin{\dot\cup}\cdots\mathbin{\dot\cup}V_m$ and
$W=W_1\mathbin{\dot\cup}\cdots\mathbin{\dot\cup}W_m$. Thus
$|V|=|W|=mT$.
Independently for each $y\in W$, choose a uniformly random injection
$\phi_y:[m]\hookrightarrow[q]$. 

We first record the analogue of
Lemma~\ref{lem:label-properties}\ref{item:uniformity2}. Set $\eps=1/(m\log m)$. For $a\in[m]$,
$I\subseteq[m]$, and a family $\mathcal D=(D_i)_{i\in I}$ of subsets
of $[q]$, let $Z(a,I,\mathcal D)$ be the number of vertices
$y\in W_a$ such that $\phi_y(i)\notin D_i$ for every $i\in I$.
 As before, let $\Phi:[m]\hookrightarrow[q]$ denote a uniformly random
injection. We will also use
Lemma~\ref{lem:label-properties}\ref{item:avoidance}, which depends only
on the distribution of $\Phi$.
\begin{lemma}\label{lem:bip-uniformity}
With high probability, simultaneously for all choices of
$a$, $I$, and $\mathcal D$,
$$
\left|
\frac{Z(a,I,\mathcal D)}{T}
-\Pr\bigl(\Phi(i)\notin D_i\text{ for every }i\in I\bigr)
\right|
\le\eps.
$$
\end{lemma}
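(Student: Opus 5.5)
The proof mirrors that of Lemma~\ref{lem:label-properties}\ref{item:uniformity2}, now with a two-sided deviation bound. First fix $a\in[m]$, $I\subseteq[m]$ and a family $\mathcal D=(D_i)_{i\in I}$. The labels $\phi_y$ with $y\in W_a$ are independent and distributed as $\Phi$. Hence $Z(a,I,\mathcal D)$ is a sum of $T$ independent Bernoulli random variables, each with mean
$$p=\Pr\bigl(\Phi(i)\notin D_i\text{ for every }i\in I\bigr).$$
The two-sided Hoeffding inequality then gives
$$
\Pr\left(\left|\frac{Z(a,I,\mathcal D)}{T}-p\right|>\eps\right)\le 2\exp\bigl(-2T\eps^2\bigr)=2\exp\left(-\frac{2T}{m^2(\log m)^2}\right).
$$

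Next we take a union bound over all choices. There are $m$ choices of $a$. For each $i\in[m]$, either $i\notin I$ or $i\in I$ together with a subset $D_i\subseteq[q]$, which gives $1+2^q$ options per index. So there are at most $m(1+2^q)^m<e^{qm}$ triples $(a,I,\mathcal D)$ for large $q$. Since $T\ge qm^3(\log m)^2$, we have $T\eps^2=T/(m^2(\log m)^2)\ge qm$. The total failure probability is therefore at most $2e^{qm-2qm}=2e^{-qm}=o(1)$, which gives the statement simultaneously for all $a$, $I$, $\mathcal D$ with high probability.

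There is no real obstacle. The only point to check is that the exponent $2T\eps^2$ beats the logarithm $qm$ of the number of choices, and the choice $T=\lceil qm^3(\log m)^2\rceil$ gives exactly this, with a factor of $2$ to spare.
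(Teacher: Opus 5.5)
Your proposal is correct and is essentially the paper's proof: Hoeffding's inequality, now in two-sided form, applied to $Z(a,I,\mathcal D)$ as a sum of $T$ independent Bernoulli indicators over $y\in W_a$, followed by the same union bound over at most $m(1+2^q)^m<e^{qm}$ triples $(a,I,\mathcal D)$, using $T\eps^2\ge qm$. You have written out the details that the paper leaves implicit when it refers back to Lemma~\ref{lem:label-properties}\ref{item:uniformity2}.
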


\begin{proof}
This is the same argument as in the proof of
Lemma~\ref{lem:label-properties}\ref{item:uniformity2}.
The variable $Z(a,I,\mathcal D)$ is the sum of $T$ independent
Bernoulli random variables with common mean
$\Pr(\Phi(i)\notin D_i\text{ for every }i\in I)$.
Hence Hoeffding's inequality and a union bound prove the lemma.
\end{proof}

Fix a realization of the labels for which
Lemma~\ref{lem:bip-uniformity} holds. We now define the colouring.

\begin{construction}\label{con:bip-colouring}
For every $i\in[m]$, every $x\in V_i$, and every $y\in W$, colour the
edge $xy$ with $\phi_y(i)$.
For $i\in[m]$ and $c\in[q]$, let
$
E(F_{i,c})
=
\{xy:x\in V_i,\ y\in W,\ \phi_y(i)=c\},
$
and let $V(F_{i,c})$ consist of the endpoints of these edges.
\end{construction}
By definition, the graphs $F_{i,c}$ partition the edges of $K_{V,W}$.
Moreover, for each fixed $c\in[q]$, the colour-$c$ subgraph is the
union $F_{1,c}\cup\cdots\cup F_{m,c}$.
\begin{lemma}\label{lem:bip-construction}
Construction~\ref{con:bip-colouring} has the following properties. \begin{enumerate}[label=\textup{(\roman*)}]

\item\label{item:bip-F-complete}
For every $i\in[m]$ and $c\in[q]$, the edge set of $F_{i,c}$ is that
of the complete bipartite graph between $V_i$ and
$
\{y\in W:\phi_y(i)=c\}.
$
\item\label{item:bip-F-disjoint}
For each fixed $c\in[q]$, the graphs
$F_{1,c},\ldots,F_{m,c}$ are pairwise vertex-disjoint. 
\end{enumerate}
\end{lemma}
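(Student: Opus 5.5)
Both parts follow directly from the definition in Construction~\ref{con:bip-colouring}, so the plan is simply to unwind it.

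For \ref{item:bip-F-complete}, fix $i\in[m]$ and $c\in[q]$, and put $Y_{i,c}=\{y\in W:\phi_y(i)=c\}$. I will check that $E(F_{i,c})$ is exactly $\{xy:x\in V_i,\ y\in Y_{i,c}\}$, which is the edge set of the complete bipartite graph between $V_i$ and $Y_{i,c}$. The condition on $x$ is only $x\in V_i$, and the condition on $y$ is only $\phi_y(i)=c$. Since the colour of $xy$ with $x\in V_i$ is $\phi_y(i)$, which does not depend on $x$, every pair in $V_i\times Y_{i,c}$ is an edge of $K_{V,W}$ of colour $c$. Conversely, every edge of $F_{i,c}$ has this form. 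If $Y_{i,c}$ happens to be empty, then $F_{i,c}$ has no edges and the statement concerns only the edge set, so nothing more needs checking.

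For \ref{item:bip-F-disjoint}, fix $c$ and take distinct $i,j\in[m]$. A vertex of $F_{i,c}$ is either in $V_i$ or is some $y\in W$ with $\phi_y(i)=c$. A vertex of $F_{i,c}\cap F_{j,c}$ lying in $V$ would have to lie in $V_i\cap V_j$, which is empty because the classes are disjoint. A vertex lying in $W$ would satisfy $\phi_y(i)=\phi_y(j)=c$ with $i\ne j$, which contradicts the injectivity of $\phi_y$. Hence the graphs are pairwise vertex-disjoint.

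There is no real obstacle here. The only point needing care is to track which side each vertex lies on; unlike the complete-graph case, no auxiliary graph $J$ is needed, because $V$ and $W$ are disjoint and so no vertex can be both an earlier and a later endpoint.
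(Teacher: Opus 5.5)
Your proposal is correct and follows the paper's proof: part (i) is read off directly from the definition of $F_{i,c}$, and for part (ii) you use disjointness of the classes $V_1,\ldots,V_m$ on the $V$ side and injectivity of $\phi_y$ on the $W$ side. The paper argues in exactly this way.
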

\begin{proof}
Property~\ref{item:bip-F-complete} follows directly from the definition.
For \ref{item:bip-F-disjoint}, fix $c\in[q]$. The sets
$V_1,\ldots,V_m$ are pairwise disjoint. If some $y\in W$ belongs to
both $F_{i,c}$ and $F_{j,c}$ for distinct $i,j$, then
$\phi_y(i)=\phi_y(j)=c$, contradicting the injectivity of $\phi_y$.
\end{proof}

For a matching $M$ in $K_{V,W}$ and $i\in[m]$, let
$
C_i(M)=\{c\in[q]:M\cap E(F_{i,c})\ne\varnothing\}$ and $
X(M)=\frac1q\sum_{i=1}^m|C_i(M)|.
$

\begin{lemma}\label{lem:bip-path-matching}
Suppose that Construction~\ref{con:bip-colouring} is
partitioned into $\ell$ monochromatic paths. Then there is a perfect
matching $M$ such that
$
\sum_{i=1}^m|C_i(M)|\le {3\ell}/{2}.
$
\end{lemma}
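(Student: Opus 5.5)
The plan is to build $M$ path by path, as in Lemma~\ref{lem:path-matching}, and then repair the unmatched vertices with a few extra edges, each of which costs at most one new pair $(i,c)$.

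\textbf{Step 1: one pair per path.} Let $\mathcal P$ be the partition into $\ell$ monochromatic paths. Fix $c$ and consider a path $P\in\mathcal P$ of colour $c$ with at least one edge. Since $P$ is connected, Lemma~\ref{lem:bip-construction}\ref{item:bip-F-disjoint} shows that it lies in a unique $F_{i,c}$. So if we take a maximum matching $M_P$ in each path, the union $M_0=\bigcup_P M_P$ satisfies $\sum_i|C_i(M_0)|\le\ell$. Single-vertex paths contribute nothing.

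\textbf{Step 2: the unmatched vertices.} Since $K_{V,W}$ is bipartite, every path alternates between $V$ and $W$.
\begin{itemize}
\item A path on an even number of vertices has equally many vertices in $V$ and in $W$, and $M_P$ is perfect on it.
\item A path on an odd number of vertices, including a single vertex, has both endpoints on the same side. It has exactly one more vertex on that side, and $M_P$ leaves exactly one vertex unmatched, namely an endpoint on the larger side.
\end{itemize}
Let $k_V$ and $k_W$ be the numbers of odd paths whose surplus lies in $V$ and in $W$, respectively. Counting vertices gives $|V|-|W|=k_V-k_W$, and $|V|=|W|=mT$, so $k_V=k_W=:k$. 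Since $2k\le\ell$, we have $k\le\ell/2$.

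\textbf{Step 3: completing the matching.} The unmatched vertices of $M_0$ are $k$ vertices $x_1,\dots,x_k\in V$ and $k$ vertices $y_1,\dots,y_k\in W$. Pair them arbitrarily and set $M=M_0\cup\{x_jy_j:j\in[k]\}$. Because $K_{V,W}$ is complete bipartite, every $x_jy_j$ is an edge, so $M$ is a perfect matching of $K_{V,W}$.

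\textbf{Step 4: the bound.} Each added edge lies in exactly one $F_{i,c}$, since the graphs $F_{i,c}$ partition the edge set. So each added edge increases $\sum_i|C_i|$ by at most one. Therefore
$$
\sum_{i=1}^m|C_i(M)|\le\ell+k\le\frac{3\ell}{2}.
$$

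The only delicate point is the parity bookkeeping in Step 2: checking that the leftover vertices split evenly between $V$ and $W$, which is exactly where the balance $|V|=|W|$ is used. The rest is routine.
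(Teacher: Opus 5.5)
Your proof is correct and is essentially the paper's argument: the union of maximum matchings in the paths gives $\sum_i|C_i(M_0)|\le\ell$, the unmatched vertices split as $k$ in $V$ and $k$ in $W$ with $2k\le\ell$, and pairing them arbitrarily adds at most $k\le\ell/2$ new pairs $(i,c)$. One harmless inaccuracy: a maximum matching of an odd path need not leave an endpoint unmatched (e.g.\ it can leave the middle vertex of a path on five vertices), but the unmatched vertex always lies on the larger side, which is all you use; the paper gets the equal split more directly by noting that each edge of $M_0$ meets each side exactly once.
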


\begin{proof}
Choose a maximum matching in each path, and let $M_0$ be their union.
By Lemma~\ref{lem:bip-construction}\ref{item:bip-F-disjoint}, every
monochromatic path containing an edge lies in $F_{i,c}$ for a unique
pair $(i,c)$. Hence
$\sum_{i=1}^m|C_i(M_0)|\le\ell$.

Since $|V|=|W|$ and every edge of $M_0$ meets each side once, the
numbers of vertices left unmatched by $M_0$ in $V$ and $W$ are equal;
denote this common number by $s$. A maximum matching in a path
leaves at most one vertex unmatched, so $2s\le\ell$.
Pair the unmatched vertices of $V$ and $W$ arbitrarily and add the
corresponding $s$ edges to $M_0$. The resulting matching $M$ is
perfect. Each added edge belongs to one graph $F_{i,c}$ and therefore
increases $\sum_{i=1}^m|C_i(M)|$ by at most one. Consequently,
$
\sum_{i=1}^m|C_i(M)|
\le\ell+s\le {3\ell}/{2}.
$
\end{proof}

\begin{proposition}\label{prop:bip-support}
Every perfect matching $M$ in Construction~\ref{con:bip-colouring}
satisfies
$
X(M)\ge {(\log m)}/{16}
$.
\end{proposition}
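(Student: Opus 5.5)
The plan is to run a one-shot version of the avoidance argument of Lemma~\ref{lem:tail-covering}. In the bipartite setting every vertex of $W$ plays the role of a ``later endpoint'', so no tail classes or choice of $s$ are needed: the whole of $W$ serves as the tail. Write $C_i=C_i(M)$ and suppose for contradiction that $X=X(M)<(\log m)/16$. Set $L=\{i\in[m]:|C_i|>q/\log m\}$. Since $\sum_i|C_i|=qX$, we get $|L|\le X\log m<(\log m)^2/16$. Let
$$
\rho=\Pr\bigl(\Phi(i)\notin C_i\text{ for every }i\in[m]\setminus L\bigr),
$$
and let $U$ be the set of $y\in W$ with $\phi_y(i)\notin C_i$ for all $i\in[m]\setminus L$.

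For the upper bound on $\rho$, I will combine perfectness with the colouring rule. Take $y\in U$ and let $x\in V_i$ be its partner in $M$. The edge $xy$ lies in $F_{i,\phi_y(i)}$, so $\phi_y(i)\in C_i$. By the definition of $U$ this forces $i\in L$. Hence all of $U$ is matched into $\bigcup_{i\in L}V_i$, which gives $|U|\le|L|T$. On the other hand, Lemma~\ref{lem:bip-uniformity} applied in each $W_a$ with $I=[m]\setminus L$ and $D_i=C_i$ gives $|U|\ge mT(\rho-\eps)$. Together these yield
$$
\rho\le\frac{|L|}{m}+\eps\le\frac{(\log m)^2}{16m}+\frac{1}{m\log m}.
$$

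For the lower bound on $\rho$, I will apply Lemma~\ref{lem:label-properties}\ref{item:avoidance} with $I=[m]\setminus L$. Each $x_i=|C_i|/(q-m)$ is at most $(1+o(1))/\log m$, so $\log(1-x_i)\ge-x_i-2x_i^2$ holds. This gives
$$
\rho\ge\exp\Bigl(-\sum_i x_i-2\sum_i x_i^2\Bigr)\ge\exp\bigl(-(1+o(1))X-O(X/\log m)\bigr).
$$
Here I use $\sum_i x_i^2\le\max_i x_i\cdot\sum_i x_i$. Since $X<(\log m)/16$, this is at least $e^{-X-O(1)}\ge m^{-1/16-o(1)}$.

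Comparing the two bounds on $\rho$ gives $m^{-1/16-o(1)}\le O((\log m)^2/m)$, which fails for large $m$. This contradiction proves $X(M)\ge(\log m)/16$, with plenty of room in the constant.

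The main point to check carefully is the claim that a vertex of $U$ cannot be matched into $V_i$ with $i\notin L$; it uses that every edge at $y$ from $V_i$ has colour exactly $\phi_y(i)$. Apart from that, the work is the bookkeeping of the error terms: the $o(1)$ coming from $q-m$ versus $q$, and the quadratic term. Both are harmless because $X=O(\log m)$ and $m=o(q)$.
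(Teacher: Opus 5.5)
Your proof is correct and follows the paper's argument: the paper also shows $|U|\le|L|T$ using perfectness, obtains $|U|\ge mT(\rho-\eps)$ from Lemma~\ref{lem:bip-uniformity}, and compares this with the lower bound on $\rho$ from Lemma~\ref{lem:label-properties}\ref{item:avoidance}. The only difference is that the paper uses the coarser threshold $|C_i(M)|>q/4$ for $L$, giving $|L|<\log m/4$ and $\rho\ge e^{-2X(M)}\ge m^{-1/8}$ via $\log(1-x)\ge-3x/2$. Your threshold $q/\log m$ gives a larger $L$ but the sharper bound $e^{-X-O(1)}$, and both comparisons close with room to spare.
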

\begin{proof}
Suppose for a contradiction that $X(M)<(\log m)/16$.
Let $L=\{i\in[m]:|C_i(M)|>q/4\}$ and set $I=[m]\setminus L$. Then
$|L|<4X(M)\le\log m/4$.
Consider the
probability
$
\rho=\Pr\bigl(\Phi(i)\notin C_i(M)\text{ for every }i\in I\bigr).
$
For every $i\in I$, we have $|C_i(M)|\le q/4$. Since $m=o(q)$, for all
sufficiently large $q$,
$
|C_i(M)|/(q-m)<1/3
$
for every $i\in I$. Thus, by
Lemma~\ref{lem:label-properties}\ref{item:avoidance} and the inequality
$\log(1-x)\ge-3x/2$ for $0\le x\le1/3$, we obtain
$$
\begin{aligned}
\rho
&\ge
\prod_{i\in I}\left(1-\frac{|C_i(M)|}{q-m}\right) \\
&\ge
\exp\left(-\frac{3}{2(q-m)}
\sum_{i\in I}|C_i(M)|\right) \\
&\ge e^{-2X(M)}
\ge m^{-1/8}.
\end{aligned}
$$

Let $U$ be the set of vertices $y\in W$ such that
$\phi_y(i)\notin C_i(M)$ for every $i\in I$. For every $a\in[m]$, applying Lemma~\ref{lem:bip-uniformity} with
$D_i=C_i(M)$ for $i\in I$ gives
$
|U\cap W_a|\ge T(\rho-\eps).
$ Summing over $a\in[m]$ gives
$
|U|\ge mT(\rho-\eps).
$

On the other hand, let $y\in U$, and suppose that $y$ is matched by
$M$ to some $x\in V_i$. The edge $xy$ belongs to
$F_{i,\phi_y(i)}$, and hence $\phi_y(i)\in C_i(M)$. By the definition of
$U$, this is impossible when $i\in I$. Therefore every vertex of $U$ must be matched into
$\bigcup_{i\in L}V_i$, and hence
$|U|\le\sum_{i\in L}|V_i|=|L|T$. Combining this with the lower bound
for $|U|$ gives
$$
\rho\le\frac{|L|}{m}+\eps
\le\frac{\log m}{4m}+\frac{1}{m\log m}.
$$
This contradicts $\rho\ge m^{-1/8}$ for sufficiently large $m$.
\end{proof}

\begin{proof}[Proof of Theorem~\ref{thm:bip}]
Let $\ell$ be the number of parts in a monochromatic path partition of
Construction~\ref{con:bip-colouring}. By
Lemma~\ref{lem:bip-path-matching}, there is a perfect matching $M$ such
that
$
qX(M)\le {3\ell}/{2}.
$
Proposition~\ref{prop:bip-support} therefore gives
$
{3\ell}/{2}\ge qX(M)\ge {q\log m}/{16},
$
and hence $\ell\ge q\log m/24$.

Since $m=\lfloor q^{1/5}/\log q\rfloor$, we have
$\log m=(1/5+o(1))\log q$. Thus $\bpath(q)=\Omega(q\log q)$ for every sufficiently large $q$,
which proves Theorem~\ref{thm:bip}.
\end{proof}

\section{Concluding remarks and open problems}
We conclude with several open problems. The most important one is to
determine the correct orders of magnitude of $\ppath(r)$ and
$\pcyc(r)$. We conjecture that both are of order $r\log r$.

\begin{conjecture}\label{conj:order}
We have
$
\ppath(r)=\Theta(r\log r)
\quad\text{and}\quad
\pcyc(r)=\Theta(r\log r).
$
\end{conjecture}
It is also interesting to understand how far apart these two parameters
can be.

\begin{conjecture}\label{conj:path-cycle-ratio}
There exists an absolute constant $C>0$ such that
$\pcyc(r)\le C\ppath(r)$ for every $r\ge2$.
\end{conjecture}

We expect that there exists an absolute constant $C$ such that, for
every fixed $r$, if every $r$-edge-coloured complete graph can be
partitioned into at most $k$ monochromatic paths, then every
sufficiently large $r$-edge-coloured complete graph can be partitioned
into at most $Ck$ monochromatic cycles. A possible approach is to first
extract the matching structure induced by a path partition and transfer
it from complete graphs to almost complete graphs, following the
approach of Letzter~\cite{Letzter}. After applying the Regularity Lemma, this should
yield an almost-spanning matching in the reduced graph supported on at
most $k$ monochromatic components. These components should then give at
most $k$ monochromatic connected matchings, which can be lifted to
cycles by the connected-matching method. Finally, the absorption method
of Gy\'arf\'as, Ruszink\'o, S\'ark\"ozy, and Szemer\'edi~\cite{GRSS}
should cover the remaining vertices and complete the cycle partition.

\smallskip
Another natural question is to determine the smallest number of colours
for which Gy\'arf\'as's conjecture fails. Pokrovskiy~\cite{Pok} showed
that the corresponding cycle-partition conjecture of Erd\H{o}s,
Gy\'arf\'as, and Pyber already fails for $r=3$, whereas he proved that
$\ppath(3)=3$. Our result shows that Gy\'arf\'as's conjecture fails for
all sufficiently large $r$, but does not determine the first such
value.

\begin{problem}\label{prob:smallest-r}
Determine the smallest $r$ for which $\ppath(r)>r$. In particular, is
$
\ppath(4)>4?
$
\end{problem}

\section*{Acknowledgements}
H.L. was supported by the National Natural Science Foundation of China
(12501487), the China Scholarship Council, and the Institute for Basic
Science (IBS-R029-C4). L.W. was supported by the
National Key R\&D Program of China under grant number 2024YFA1013900,
the National Natural Science Foundation of China under grant number
12471327, the China Scholarship Council, and the Institute for Basic
Science (IBS-R029-C4). The authors used AI to assist in working out their ideas and improving the grammar and presentation of the manuscript. All mathematical arguments and proofs in the final manuscript were written by the authors.

\bibliographystyle{amsplain}
\bibliography{reference}

\end{document}